\documentclass[12pt,a4paper]{amsart}
\usepackage{amscd}
\usepackage{amssymb}
\usepackage[centering,text={15.5cm,22cm}]{geometry}
\usepackage{graphicx,color}
\usepackage[all]{xy}
\usepackage{mathrsfs}
\usepackage{marvosym}
\usepackage{stmaryrd}
\usepackage{srcltx}
\usepackage{hyperref}
\usepackage{verbatim}
\usepackage{graphicx}

\definecolor{shadecolor}{rgb}{1,0.9,0.7}

\setlength{\marginparwidth}{12ex}
\setcounter{tocdepth}{2}

\newtheorem{theorem}{Theorem}[section]
\newtheorem{lemma}[theorem]{Lemma}
\newtheorem{lemma-definition}[theorem]{Lemma-Definition}
\newtheorem{proposition}[theorem]{Proposition}
\newtheorem{corollary}[theorem]{Corollary}

\theoremstyle{definition}
\newtheorem{setup}[theorem]{Setup}
\newtheorem{definition}[theorem]{Definition}

\theoremstyle{remark}
\newtheorem{remark}[theorem]{Remark}

\numberwithin{equation}{section}
\numberwithin{figure}{section}



\newcommand{\NN} {\mathbb{N}}
\newcommand{\ZZ} {\mathbb{Z}}

\newcommand{\CC} {\mathbb{C}}

\newcommand {\shH}  {\mathcal{H}}

\newcommand {\shM}  {\mathcal{M}}

\newcommand {\shO}  {\mathcal{O}}

\newcommand {\shS}  {\mathcal{S}}
\newcommand {\shT}  {\mathcal{T}}


\newcommand {\Ann}  {\operatorname{Ann}}

\newcommand {\Aut}  {\operatorname{Aut}}

\newcommand {\coker} {\operatorname{coker}}

\newcommand {\eps}  {\varepsilon}

\newcommand {\Hom}  {\operatorname{Hom}}

\renewcommand {\ker } {\operatorname{ker}}
\newcommand {\kk} {\Bbbk}

\newcommand {\liminv} {\varprojlim}

\newcommand {\llog} {\mathrm{log}}

\newcommand {\lra}  {\longrightarrow}

\newcommand {\op} {\operatorname}

\newcommand {\ra}  {\to}

\newcommand {\Spec} {\operatorname{Spec}}

\newcommand {\sra} {\twoheadrightarrow}

\def\mydate{\ifcase\month \or January\or February\or March\or
April\or May\or June\or July\or August\or September\or October\or 
November\or December\fi \space\number\day,\space\number\year}

\usepackage{scalerel,stackengine}
\stackMath
\newcommand\reallywidehat[1]{%
\savestack{\tmpbox}{\stretchto{%
  \scaleto{%
    \scalerel*[\widthof{\ensuremath{#1}}]{\kern-.6pt\bigwedge\kern-.6pt}%
    {\rule[-\textheight/2]{1ex}{\textheight}}
  }{\textheight}%
}{0.5ex}}%
\stackon[1pt]{#1}{\tmpbox}%
}


\begin{document}


\title
[Uniqueness of approximations, finite determinacy of log morphisms]
{Local uniqueness of approximations and finite determinacy of log morphisms}
\author{Helge Ruddat}

\address{JGU Mainz, Institut f\"ur Mathematik, Staudingerweg 9, 55099 Mainz, Germany}
\thanks{This work was financially supported by DFG Emmy-Noether grant RU 1629/4-1. The author thanks the IAS in Princeton, JGU Mainz and Univ. Hamburg for their hospitality.}
\email{ruddat@uni-mainz.de}

\maketitle

\newcommand{\bbb}[1]{\llbracket #1 \rrbracket}
\newcommand{\bbbt}{\bbb{t}}
\newcommand{\bbbQ}{\bbb{Q}}
\newcommand{\Def}{\operatorname{Def}}
\newcommand{\lcm}{\operatorname{lcm}}
\newcommand{\LT}{\operatorname{LT}}

\section{Introduction}
This article concerns the finite determinacy of the local structure of degenerating families in a sense that is best explained by the following two examples, for precise statements see \S\ref{section-main-results}.
A normal crossing degeneration is a flat family over a disc with coordinate $t$ that is locally of the form
$$z_0\cdot...\cdot z_i = t^N.$$
This family is \emph{determined at finite order} in the sense that any other flat family over the disc that is isomorphic to this one modulo $t^{k}$ for some $k>N$ is in fact entirely isomorphic locally near the origin. 
Indeed, by flatness, the new family is also given by a single equation and this is of the form
$$z_0\cdot...\cdot z_i = t^N+t^{k} f(t,z_0,...,z_n)$$
for $n$ the dimension of the fibres and $f(t,z_0,...,z_n)$ some perturbation term.
We can rewrite this as
$$z_0\cdot...\cdot z_i = t^N\underbrace{(1+t^{k-N} f(t,z_0,...,z_n))}_{=:g}$$
and since $g$ is invertible at the origin, we may absorb $g$ into one of the variables by a coordinate change of the form $z_0\mapsto g z_0$ to find this new family to be isomorphic to the original one locally at the origin. The purpose of this note is to generalize this result to situations where there are more equations than variables, making a similar ad hoc calculation as above very difficult.
The following feature observable in the above example will hold more generally as well: if the initial finite order isomorphism of the two families is modulo $t^{k}$ then the final all-order isomorphism of the two families agrees with the initial one modulo $t^{k-N}$. In the theorems below, this is actually $t^{k-4N}$ in part because we generally also treat non-complete intersections.

A slight modification of the above example makes the finite determinacy fail:
$$xy = w^Mt^N,$$
for some $M\ge 2$. Indeed, adding the perturbation term $t^{k}$ for some $k>N$ results in having smooth nearby fibres while the original family has an $A_{M-1}$-singularity as nearby fibre.
Thus, in proving the finite determinacy, we make the assumption that the nearby fibres are locally rigid (e.g. smooth). 
We however also provide a result that covers the second example in the following way. If the perturbation term is required to be divisible by $w^l$ for $l\ge M$, i.e.~geometrically, if the second family preserves a principal divisor containing the singularity, then finite determinacy holds. Indeed, similar to before,
$$xy = w^Mt^N + t^{k}w^l f(t,x,y,w)= w^Mt^N(1 + t^{k-N}w^{l-M} f(t,x,y,w)).$$

As an application of the precise approximation results in the next section, we prove in the final section (Theorem~\ref{main-log-result}) a more general version of the following result.
Let $B$ be regular and one-dimensional scheme over $\CC$ with distinguished point $0\in B$ and uniformizer $t$.
Let $f:X\ra B$ and $g:X'\ra B$ be flat finite type morphisms with $X$ a normal scheme and $f$ smooth away from $X_0=f^{-1}(0)$.
For $k\ge 0$, we denote by $f_k:X_k\ra B_k$ and $g_k:X'_k\ra B_k$ the base change of $f$ and $g$ to $B_k:=\Spec\shO_{B,0}/t^k$ respectively.

\begin{theorem} \label{theorem-intro} 
There is an integer $k>0$ depending only on $f$ so that if there is an isomorphism $\varphi:X'_k\ra X_k$ satisfying $g_k=f_k\circ \varphi$ then 
\begin{enumerate}
\item the maps of pairs $(X,X_0)\ra (B,0)$ and $(X',X'_0)\ra (B,0)$ are \'etale locally along $X_0$ and $X'_0$ isomorphic, so in particular 
\item $X'$ is also normal in a neighbourhood of $X'_0$ and the nearby fibres of $f'$ are also smooth and
\item for $X^\dagger_0$, ${X_0'}^\dagger$ and $0^\dagger$ the log spaces obtained from restricting the divisorial log structures of the pairs $(X,X_0), (X',X'_0), (B,0)$ to the divisor respectively, there is an isomorphism 
$\varphi^\dagger:{X_0'}^\dagger\ra X_0^\dagger$ over $0^\dagger$ whose underlying morphism is the reduction of $\varphi$ modulo $t$.
\end{enumerate}
\end{theorem}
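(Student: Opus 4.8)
The plan is to deduce the geometric statement from a local uniqueness-of-approximations result applied to the defining equations of $f$ and $g$. The essential mechanism is already visible in the worked example in the introduction: a perturbation of the defining equations by a term divisible by $t^k$ can be absorbed by a coordinate change once $k$ is large enough relative to the complexity of $f$. I would first reduce to a local statement at a point $x\in X_0$. Since $\varphi:X'_k\to X_k$ is an isomorphism over $B_k$, it identifies the completed local rings modulo $t^k$; choosing formal coordinates, the morphism $f$ near $x$ is cut out by finitely many equations in a power series ring over $\shO_{B,0}$, and $g$ near $\varphi^{-1}(x)$ is cut out by equations agreeing with those of $f$ modulo $t^k$. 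The goal is then to produce an honest isomorphism of the completed local rings over $\shO_{B,0}$ whose reduction mod $t^k$ is the prescribed $\varphi$, or at least close to it.

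\emph{Key steps, in order.} First I would invoke the local uniqueness of approximations (the results advertised in \S\ref{section-main-results}) to the ideal sheaves of $X$ and $X'$: the hypothesis that the nearby fibres are smooth (hence locally rigid) is what rules out the failing example $xy=w^Mt^N$, and it should enter as a statement that the relevant obstruction space, governing deformations of the equations that cannot be undone by coordinate changes, vanishes or is controlled. Concretely I expect this to be packaged as: if two flat families over $\shO_{B,0}$ agree modulo $t^k$ for $k$ larger than some invariant $4N$ of $f$, and the generic fibre of $f$ is smooth, then they are formally isomorphic over $\shO_{B,0}$, with the isomorphism agreeing with the identity modulo $t^{k-4N}$. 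Second, I would globalize: the formal isomorphisms constructed pointwise need to be promoted to an \'etale-local isomorphism of the pairs $(X,X_0)\to(B,0)$ and $(X',X'_0)\to(B,0)$, which gives part (1). Part (2) is then immediate, since normality and smoothness of nearby fibres are \'etale-local properties transported across the isomorphism of part (1).

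For part (3), I would pass to the reductions modulo $t$. The divisorial log structures on $(X,X_0)$, $(X',X'_0)$ and $(B,0)$ are functorial in the morphisms of pairs, so the \'etale-local isomorphism of pairs from part (1) induces \'etale-local isomorphisms of the restricted log spaces $X_0^\dagger\to {X_0'}^\dagger$ over $0^\dagger$. The content is to check that these local pieces glue to a \emph{global} log isomorphism $\varphi^\dagger$ whose underlying scheme morphism is $\varphi\bmod t$. Here the point is that the underlying morphism is already globally given (it is the prescribed $\varphi$ reduced mod $t$), so the only ambiguity is in the log-structure data, and the finite determinacy result should guarantee that the chart-level log isomorphisms agree on overlaps up to the prescribed order; the quantitative estimate $t^{k-4N}$ ensures compatibility on the nose after reduction mod $t$.

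\emph{Main obstacle.} The hardest part, I expect, is not the existence of the local formal isomorphisms but their \emph{coherence}: the uniqueness statement produces an isomorphism of completed local rings that is only canonical up to the ambiguity measured by automorphisms of $f$ over $\shO_{B,0}$, so gluing the local isomorphisms into an \'etale-local isomorphism of pairs, and then matching the underlying morphism with the given $\varphi$ mod $t$, requires controlling this ambiguity uniformly. This is precisely where the quantitative bound in the approximation theorem (the $t^{k-4N}$ agreement, rather than mere existence of an all-order isomorphism) must do the work: it pins down the isomorphism closely enough to $\varphi$ that the local choices are forced to be compatible. Making the dependence of $k$ on $f$ uniform in $x\in X_0$, so that a single $k$ works along all of $X_0$ (using finiteness of $f$ and quasi-compactness), is the secondary technical point I would need to handle with care.
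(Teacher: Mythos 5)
Your overall architecture matches the paper's: parts (1) and (2) are obtained exactly as you describe, by applying the local uniqueness theorems of \S\ref{section-main-results} (Theorem~\ref{etale-result}, resp.\ Theorem~\ref{analytic-result}) after using smoothness of the nearby fibres and Lemma~\ref{rigid-nearby-fibres} to conclude that $\shT^1_{X/Y}$ is annihilated by a power $t^N$ near $X_0$, with properness/finite type supplying a uniform $N$ along $X_0$. Since (1) is only an \'etale-local assertion, no globalization is actually needed there, and (2) is indeed immediate.

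The genuine gap is in part (3), in the mechanism you propose for gluing the local log isomorphisms. You assert that, the underlying morphism being globally the given $\varphi$ mod $t$, the quantitative agreement of the local isomorphisms ``ensures compatibility on the nose after reduction mod $t$.'' This does not follow as stated: a section $m$ of $\shM_{(X,f^{-1}(0))}$ restricted to $X_0$ is not determined by any mod-$t$ reduction (e.g.\ $t$ itself restricts to $0$ in $\shO_{X_0}$ yet is a nontrivial section of $\shM_{X_0}$), so two local scheme isomorphisms $\varphi_\alpha,\varphi_\beta$ agreeing to high order in $t$ induce maps on $\shM|_{X_0}$ whose discrepancy must still be shown to vanish. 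The paper's proof of Theorem~\ref{main-log-result} requires three ingredients absent from your sketch: (i) constructibility of $\overline\shM$ with toric stalks embedding into vanishing-order monoids (Lemma~\ref{constructible}; note the divisorial log structure need not even be coherent, so one cannot argue chart by chart); (ii) a \emph{second}, independent lower bound on $k$, namely $2N\ge m(X_0,\rho_1,\dots,\rho_s)$, dominating the vanishing orders along $X_0$ of the minimal generators of the stalks of $\overline\shM$ --- your $k$ depends only on the annihilator of $\shT^1$, which is insufficient, since one must in effect divide the difference $\varphi_\alpha(m)-\varphi_\beta(m)\in t^{2N}\shO_X$ by $m$, whose own vanishing order along $X_0$ must be controlled by $2N$; and (iii) the identification $\shHom(\overline\shM_X,\shO_X^\times)\cong\Aut(\shM_X)$ of \cite{ScSi06}, which converts the gluing discrepancy $\varphi_\beta^{-1}\circ\varphi_\alpha$ into units of the form $1+t^{2N}(\cdots)$ evaluated on generators, and only \emph{then} does restriction to $X_0$ kill it. Without (ii) in particular, the claim that $k$ depends ``only on $f$'' is incomplete for the log-structure statement.
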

This result is used in joint work with other authors \cite{RS19, smoothy} for studying analytic approximations of formal families. 
Theorem~\ref{theorem-intro} generalizes a theorem by Kisin \cite[Cor.~2.5]{kisin} who imposed the extra condition that $f$ be log smooth. Kisin's result in turn had been a generalization of a result by Illusie \cite[A.4]{nakayama}, \cite{illusie} who required the central fibre to be semistable. 
In the context of these prior works, we arrive at an interesting corollary.
The \emph{Kato-Kakayama space} is a topological space that is functorially associated to a log scheme over $\CC$ \cite{katonakayama,rounding}. 
In the situation of Theorem~\ref{theorem-intro}, assume the base ring is $\CC$ and we work with the complex topology. Restricting the log structures to central fibre $X_0:=f^{-1}(0)$ and the origin $0$ in $B$ respectively gives a log morphism $X_0^\dagger\ra 0^\dagger$ which is determined by the finite order $k$ in the theorem. Thus, the associated map of Kato-Nakayama spaces  $(f_0)_\llog:(X_0)_\llog\ra 0_\llog\cong S^1$ is also determined by the order $k$. 
On the other hand, for every $x\in 0_\llog$, the fibre $X_x:=(f_0)^{-1}_\llog(x)$ is expected to be homeomorphic to the nearby fibre of $f_0$, i.e.~a general fibre of $f$ and furthermore the topological family $(f_0)_\llog$ is expected to be the topological realization of the monodromy operation on the nearby fibre. 
To be more precise, let $r:X_x\ra (X_0)_\llog \ra X_0$ be the natural composition of embedding and projection. 
From the adjunction $\op{sp}:\ZZ \ra Rr_*\ZZ$ we obtain the exact triangle $\ZZ \ra Rr_*\ZZ \ra \op{cone}(\op{sp})\stackrel{[1]}{\ra}$ that is expected to be naturally isomorphic to the exact triangle
\begin{equation} \label{eq-nearby-vanishing}
\ZZ\ra\Psi_f\ZZ \ra \Phi_f\ZZ \stackrel{[1]}{\lra}
\end{equation}
of Deligne's nearby and vanishing cycles sheaves.
However, this hasn't been proved yet in the generality of Theorem~\ref{theorem-intro}. 
The most general situation where $(f_0)_\llog$ was shown to be a topological fibre bundle realizing the topological monodromy is when $f$ is \emph{relatively smooth}, a generalization of log smoothness given in \cite[Def.~3.6\,(2)]{rounding}.
The result is \cite[Theorem 3.7]{rounding}. We thus obtain the following corollary.
\begin{corollary} \label{corollary-intro}
Assume we are in the situation of Theorem~\ref{theorem-intro} over $\CC$ with $f$ relatively smooth then the nearby and vanishing cycle sequence $\eqref{eq-nearby-vanishing}$ is determined by the finite order $k$ in the sense that any other relatively smooth map $g$ with and isomorphism of the reductions of $f,g$ modulo $t^k$ induces a canonically isomorphic nearby and vanishing cycle sequence \eqref{eq-nearby-vanishing}.
\end{corollary}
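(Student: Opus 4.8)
The plan is to reduce the statement to Theorem~\ref{theorem-intro}(3) by transporting the entire topological picture through the Kato--Nakayama functor, exploiting that under relative smoothness the sequence \eqref{eq-nearby-vanishing} is controlled functorially by the log central fibre $f_0^\dagger\ra 0^\dagger$ alone.

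First I would apply Theorem~\ref{theorem-intro}(3) to the given isomorphism $\varphi:X'_k\ra X_k$ over $B_k$ to obtain a log isomorphism $\varphi^\dagger:{X'_0}^\dagger\ra X_0^\dagger$ over $0^\dagger$ whose underlying morphism is the reduction $\ol\varphi:X'_0\ra X_0$ of $\varphi$ modulo $t$. Applying the Kato--Nakayama functor $(-)_\llog$, which is functorial on log schemes over $\CC$, to $\varphi^\dagger$ and to the two structure morphisms produces a homeomorphism $(\varphi^\dagger)_\llog$ inside the commutative square
\[
\begin{CD}
({X'_0})_\llog @>{(\varphi^\dagger)_\llog}>> (X_0)_\llog \\
@V{(g_0)_\llog}VV @VV{(f_0)_\llog}V \\
0_\llog @= 0_\llog
\end{CD}
\]
covering $\ol\varphi$ on underlying spaces. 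Hence the topological families $(g_0)_\llog$ and $(f_0)_\llog$ over $0_\llog\cong S^1$ are isomorphic, compatibly with their projections to $X'_0$ and $X_0$.

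Next, as $f$ and $g$ are both relatively smooth, \cite[Theorem~3.7]{rounding} applies to each and identifies \eqref{eq-nearby-vanishing} for $f$ (resp.\ $g$) with the topological triangle $\ZZ\ra Rr_*\ZZ\ra\op{cone}(\op{sp})\stackrel{[1]}{\ra}$ built from $r:X_x\ra(X_0)_\llog\ra X_0$ (resp.\ its evident analogue $r'$). Since $(\varphi^\dagger)_\llog$ carries fibres of $(g_0)_\llog$ homeomorphically onto fibres of $(f_0)_\llog$ over the same point of $S^1$ and covers $\ol\varphi$, it intertwines $r'$ with $r$ and so identifies $Rr'_*\ZZ$ with the pullback $\ol\varphi^{*}Rr_*\ZZ$ compatibly with the adjunction maps $\op{sp}$. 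Combining this with the two instances of \cite[Theorem~3.7]{rounding} yields the asserted canonical isomorphism between the two copies of \eqref{eq-nearby-vanishing}.

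The hard part is the naturality of the identification supplied by \cite[Theorem~3.7]{rounding}: one must know that the comparison between \eqref{eq-nearby-vanishing} and the topological triangle is compatible with log isomorphisms of the central fibre over $0^\dagger$, i.e.\ commutes with $(\varphi^\dagger)_\llog$ and $\ol\varphi$. Because the monodromy realization there is assembled canonically out of the Kato--Nakayama space and the adjunction $\op{sp}$, both functorial in the log morphism $f_0^\dagger\ra 0^\dagger$, this compatibility is expected to hold; confirming it means tracing through the construction in \cite{rounding} rather than merely quoting its statement. That one and the same order $k$ governs every competing $g$ requires no extra argument, since by Theorem~\ref{theorem-intro} it depends only on $f$.
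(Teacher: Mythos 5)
Your argument is essentially the paper's own: the corollary is deduced from the discussion preceding it in the introduction, namely that Theorem~\ref{theorem-intro}(3) determines the log central fibre $X_0^\dagger\ra 0^\dagger$ at order $k$, hence determines $(f_0)_\llog$ by functoriality of the Kato--Nakayama construction, and \cite[Theorem 3.7]{rounding} (applicable since $f$ is relatively smooth) identifies the triangle \eqref{eq-nearby-vanishing} with the topological triangle built from $r$. Your caveat about the naturality of that identification is fair, but the paper operates at the same level of detail, so there is no substantive difference in approach.
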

The order $k$ in the above results is given explicitly in the next section.
The nearby cycle functor has been defined before in positive characteristic for a log smooth morphism \cite{nakayama}.
We hope that Theorem~\ref{theorem-intro} will provide the basis for future generalizations of the finite determinacy of the nearby cycle functor and the Lefschetz trace formula.

I would like to thank Klaus Altmann and Pierre Deligne for encouraging conversations. I am indebted to Duco van Straten for pointing out how Artin's theorem can be applied and for finding a serious error in an earlier version. 
I thank Bernd Siebert and Simon Felten for their careful reading and various suggestions for improvement. I thank Hubert Flenner for confirming that the main result was an open problem. I am grateful also to the anonymous quick opinion donator at Compositio for bringing to my attention Kisin's prior work and for suggesting various improvements of the presentation.


\section{Local uniqueness results}
\label{section-main-results}
We give three versions of the main result: a formal, an \'etale and a complex analytic one. Proofs for the following four results are provided in \S\ref{section-proofs}. In \S\ref{section-divisor}, we prove similar theorems where we allow the nearby fibre to be non-rigid but fix a principal divisor containing the non-rigid locus.
As an application, we prove the finite determinacy of the log structures on fibres of log morphisms in \S\ref{section-log-application}.

For $B$ a ring, $P=B[z_1,...,z_n]$, $A=P/J$ for an ideal $J\subset P$ and $M$ and $A$-module, one defines the $A$-module $T^1({A/B},M)=\coker(\op{Der}_B(P,M)\ra\Hom_A(J,M))$. The author's favorite reference for the $T^i$ functors is \cite{hartshorne}; the original source is \cite{lichtenbaumschlessinger}. We will give a review of the properties that we are going to use in the proof in \S\ref{section-proofs}. 
Let $\shT^1_{\Spec A/\Spec B}$ denote the coherent sheaf associated to $T^1({A/B},A)$.

\begin{setup} \label{the-setup}
Let $X\ra Y$ be a flat finite type map of Noetherian affine schemes and $t\in \Gamma(Y,\shO_Y)$ a non-zero-divisor. Flatness implies that the pull-back of $t$ is also a non-zero-divisor on $X$. Assume that $\shT^1_{X/Y}$ is supported in $t=0$ (i.e. it is annihilated by a power of $t$). 
We denote by $X_k\ra Y_k$ the base change of $X\ra Y$ to 
$\Spec \Gamma(Y,\shO_Y)/(t^k)$. We denote by $\widehat X\ra \widehat Y$ the completion of $X\ra Y$ in $t$.
Let $Z\subseteq X$ be a closed subscheme (possibly empty) so that $t$ is a non-zerodivisor also on $Z$.
\end{setup}

\begin{theorem}[\bf formal] 
\label{formal-result}
There exists $N>0$ so that if $X'\ra Y$ is another flat map of Noetherian affine schemes, $Z'\subseteq X'$ a closed subscheme and 
$\varphi:X'_k\cup Z'\ra X_k\cup Z$ an isomorphism over $Y$ for some $k>4N$ that restricts to an isomorphism $Z'\stackrel{\sim}\ra Z$, then there is an isomorphism $\widehat\varphi:\widehat X'\ra \widehat X$ that commutes with the maps to $\widehat Y$ and so that the restrictions of $\widehat\varphi$ and $\varphi$ to 
$\hat Z'\cup X'_{k-2N}$ agree where $\hat Z'$ is the completion of $Z'$ in $t$.
\end{theorem}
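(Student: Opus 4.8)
The plan is to reduce the statement to a comparison of two flat families inside a common formal ambient space, and then to match them by a $t$-adically convergent sequence of coordinate changes, using the annihilation of $\shT^1_{X/Y}$ by some $t^N$ to bound the loss of accuracy at each step. Because the conclusion concerns the $t$-adic completions, $t$-adic completeness of $\widehat Y$ will play the role that Artin approximation plays in the \'etale and analytic variants, so no approximation theorem is needed here.

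First I would fix a convenient presentation. Write $X=\Spec A$ with $A=P/J$, $P=B[z_1,\dots,z_n]$, and pass to the $t$-adic completion $\widehat P$ of $P$, so that $\widehat A=\widehat P/\widehat J$. Using $\varphi$ modulo $t^k$, I would lift the $n$ coordinate functions $z_i\circ\varphi$ from $\Gamma(X'_k,\shO)$ to $\Gamma(\widehat{X}',\shO)$; the resulting map $\widehat{X}'\to\widehat{\AA}^n_{\widehat Y}$ is a closed immersion modulo $t$, so completeness and Nakayama make it a closed immersion, giving a presentation $\widehat A'=\widehat P/\widehat J'$ in the same ring with $\widehat J+t^k\widehat P=\widehat J'+t^k\widehat P$. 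After this normalization the extra data on $Z$ becomes a closed subscheme $\widehat Z\subseteq\Spec\widehat A$ on which $t$ is a non-zerodivisor, together with the requirement that the isomorphism I build restrict to the identity on $\widehat Z$; the approximation will be carried out relative to $\widehat Z$ throughout.

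The core is an iteration. Suppose I have an automorphism $\Phi_m$ of $\widehat P$ over $\widehat Y$, restricting to the identity on $\widehat Z$ and with $\Phi_m(\widehat J)+t^m\widehat P=\widehat J'+t^m\widehat P$. Choosing generators of $J$, the discrepancy at order $t^m$ is recorded by a homomorphism $J\to A_0$ with $A_0=A/t$, which, because both $\widehat A$ and $\widehat A'$ are flat over $\widehat Y$, is well defined modulo the image of $\op{Der}_B(P,A_0)$; that is, it gives a class in a $T^1$ group twisted by $t^m/t^{m+1}$. The decisive point is that this class lies in a subquotient of the module $T^1(A/B,A)$ underlying $\shT^1_{X/Y}$, on which $t^N$ acts as zero by hypothesis, so that although the obstruction need not vanish it becomes a coboundary after multiplication by $t^N$: I can cancel it by a coordinate change that is the identity modulo $t^{m-N}$ and fixes $\widehat Z$, improving the agreement of the two ideals. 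Iterating, and using that $\widehat Y$ is $t$-adically complete, the infinite composite $\widehat\Phi=\cdots\Phi_{m+1}\Phi_m$ converges to an automorphism of $\widehat P$ with $\widehat\Phi(\widehat J)=\widehat J'$ fixing $\widehat Z$, hence inducing the desired $\widehat\varphi:\widehat X'\to\widehat X$ over $\widehat Y$. Since every correction is the identity modulo $t^{k-2N}$, so is $\widehat\Phi$, which yields the asserted agreement of $\widehat\varphi$ and $\varphi$ on $\hat Z'\cup X'_{k-2N}$; the hypothesis $k>4N$ is what keeps $k-2N$ above the torsion order so that the first step has room to run.

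The main obstacle, and the place that needs the review of the $T^i$-formalism promised earlier, is the deformation-theoretic bookkeeping that makes the previous paragraph precise: identifying the per-step discrepancy with a genuine class in a subquotient of $T^1(A/B,A)$, rather than merely in $T^1(A/B,A_0)$, so that the annihilator $t^N$ of $\shT^1_{X/Y}$ applies. Two comparisons are needed. First, flatness of \emph{both} families must be used to see that the discrepancy is an honest first-order class, so that no $T^2$-obstruction interferes. Second, the long exact sequence of $T^1(A/B,-)$ attached to $0\to A\xrightarrow{\,t\,}A\to A_0\to 0$, together with the base change relating $T^1(A/B,A_0)$ to the mod-$t^m$ groups, must be analyzed to turn ``$t^N$ kills $T^1(A/B,A)$'' into the concrete fact that the linearized matching equation is solvable after dividing by $t^{N}$, and the subscheme $Z$ must be carried along by replacing the coefficient module $A$ with the ideal sheaf of $Z$, whose relevant $T^1$ is annihilated by a controlled power of $t$ via the same module sequences. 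Accounting for these two factors of $N$ is exactly what produces the explicit thresholds $k>4N$ and the loss $t^{k-2N}$.
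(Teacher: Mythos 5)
Your overall strategy---present both $t$-adic completions as quotients of one completed polynomial ring with ideals agreeing modulo $t^k$, then correct by derivations order by order using the $t$-power annihilation of $\shT^1_{X/Y}$, with plain $t$-adic convergence replacing Artin approximation---is the same as the paper's, which proves surjectivity of $\widehat{B[z_1,\ldots,z_n]}\ra\widehat A'$, lifts the equations to $f_i+t^kg_i$ and the relations along with them, and then runs an induction producing isomorphisms $A'/t^{l+1}I'\ra A/t^{l+1}I$; your repackaging as a convergent composite of automorphisms of the ambient ring is only cosmetically different from the paper's repair $\psi_l:=\varphi_{l+2N+1}/t^l$ of the non-compatible tower at the end.

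The one step that fails as literally written is the identification of the per-step discrepancy with a class in a $T^1$ group ``twisted by $t^m/t^{m+1}$'', i.e.\ with coefficients in a module killed by $t$: there the hypothesis $t^NT^1(A/B,I)=0$ is vacuous and produces no cancellation, which is precisely why a one-order-of-$t$ Newton step cannot work. The paper's device is to measure the discrepancy over a window of $2N+1$ orders of $t$: the mismatch $h(f_i')-f_i$ lies in $t^l I + J$, hence defines $\bar h\in\Hom\bigl(J'/J'^2,(t^{l-2N}I+J)/(t^{l+1}I+J)\bigr)$, which is visibly $t^{2N}$ times another well-defined homomorphism (well-definedness is where lifting of relations, hence flatness, enters), and $t^{2N}$ annihilates $T^1\bigl(A/B,t^{l-2N}I/t^{l+1}I\bigr)$ because that module is an extension of $\ker_{t^N}T^2(A/B,I)$ by $T^1(A/B,I)$, each killed by $t^N$. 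In particular the $T^2$ term does interfere---contrary to your remark that flatness prevents this---and is the source of the second factor of $N$; moreover $N$ must additionally be chosen so that $\ker_{t^N}T^2(A/B,I)$ is the stable member of the chain of kernels, a condition absent from your sketch. You do correctly identify the remaining ingredient, namely taking the coefficient module to be the ideal $I$ of $Z$ so that every correction vanishes on $Z$; the paper implements this via the decomposition $A/t^lI=A/t^l\times_{A/((t^l)+I)}A/I$.
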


The authors of \cite{MvS02} prove a theorem similar to Theorem~\ref{formal-result} without the statement that the resulting isomorphism is compatible with the maps to $Y$, using rather different methods. 

For the next theorem, assume to work over a field or excellent discrete valuation ring.
\begin{theorem}[\bf \'etale local]
\label{etale-result}
There exists $N>0$ so that if $X'\ra Y$ is another flat map of Noetherian affine schemes, $Z'\subseteq X'$ a closed subscheme, $\varphi:X'_k\cup Z'\ra X_k\cup Z$ an isomorphism over $Y$ for some $k>4N$ that restricts to an isomorphism $Z'\stackrel{\sim}\ra Z$ and $x\in X_k$ a point, then there are \'etale neighbourhoods $U,U'$ of $x$ in $X,X'$ respectively and an isomorphism $\varphi_x:U'\ra U$ that commutes with the maps to $Y$ and so that $\varphi_x$ and $\varphi$ agree when restricting to $U'\times_{X'}(X'_{k-2N}\cup Z').$
\end{theorem}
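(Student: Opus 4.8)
The plan is to deduce the \'etale-local statement from the formal one, so I would take $N$ to be exactly the integer produced by Theorem~\ref{formal-result} and apply that theorem to the given data $(X',Z',\varphi)$. This yields a formal isomorphism $\widehat\varphi:\widehat X'\ra\widehat X$ over $\widehat Y$ whose restriction to $\hat Z'\cup X'_{k-2N}$ agrees with $\varphi$ (note $k-2N<k$, so the reduction of $\varphi$ modulo $t^{k-2N}$ is defined). Everything that follows is local at the chosen point.

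Next I would localize this formal isomorphism at $x$. Since $X_k$ and $X_0=V(t)$ have the same underlying space and $\varphi$ is an isomorphism over $Y$ inducing one on central fibres, the point $x$ determines a point $x'\in X'_0$, with the reduction of $\varphi$ identifying the residue fields. Writing $A=\O_{X,x}$ and $A'=\O_{X',x'}$, and using $t\in\maxid_x$ together with $\widehat A/t^n\widehat A=A/t^nA$, the $t$-adic completion $\widehat\varphi$ induces, after further completion at the maximal ideals, an isomorphism of the complete local rings $\widehat{\O_{X,x}}$ and $\widehat{\O_{X',x'}}$ compatible with the base; by construction it still agrees with $\varphi$ modulo $t^{k-2N}$ and along the completion of $Z'$.

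I would then phrase the remaining step as an Artin approximation problem. Because we work over a field or an excellent discrete valuation ring, $\O_{X,x}$ is excellent, hence so is its Henselization $R:=\O^h_{X,x}$, an excellent Henselian local ring with completion $\widehat R=\widehat{\O_{X,x}}$. Consider the functor $F$ on $R$-algebras parametrizing local isomorphisms of $X'$ with $X$ over $Y$, cut out by the two conditions that such an isomorphism reduce to $\varphi$ modulo $t^{k-2N}$ and restrict to the fixed isomorphism $\varphi\colon Z'\xrightarrow{\ \sim\ }Z$ along $Z'$; since $X,X',Z,Z'$ are of finite type and $t^{k-2N}$ and $Z'$ are fixed closed data, these are closed conditions and $F$ is of finite presentation. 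The localized formal isomorphism of the previous paragraph is a point of $F(\widehat R)$, so Artin's approximation theorem produces a point of $F(R)$. Spreading this finite-type datum out over a sufficiently small \'etale neighbourhood gives $U,U'$ of $x,x'$ and an isomorphism $\varphi_x:U'\ra U$ over $Y$ agreeing with $\varphi$ on $U'\times_{X'}(X'_{k-2N}\cup Z')$, as required.

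The step I expect to demand the most care is the bookkeeping around the two compatibility constraints and their interaction with Artin's theorem. Matching $\varphi$ modulo $t^{k-2N}$ is a genuine finite-order condition, whereas matching the prescribed $\varphi|_{Z'}$ is a condition along a closed subscheme $Z'$ that propagates in the $t$-direction; one must verify that both can be imposed simultaneously as closed subfunctors of finite presentation, and that the localized formal isomorphism honestly lies in the resulting $F(\widehat R)$, so that the hypotheses of Artin approximation are genuinely met. The auxiliary passage from the $t$-adic completion delivered by Theorem~\ref{formal-result} to the $\maxid_x$-adic completion needed to localize the problem at $x$ is the other point where I would be careful, though it is standard for Noetherian rings.
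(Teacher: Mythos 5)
Your proposal is correct and follows essentially the same route as the paper: take $N$ from Theorem~\ref{formal-result}, obtain the $t$-adic formal isomorphism, and approximate it \'etale-locally via Artin's theorem over a field or excellent DVR, encoding the congruence mod $t^{k-2N}$ and the compatibility along $Z'$ as auxiliary constraints. The paper phrases the Artin input as an explicit polynomial system $f_i(a_1,\ldots,a_n)=\sum_j b_{ij}g_j$ and invokes \cite[Corollary (2.1)]{artin1} directly in the $t$-adic setting, rather than your functorial formulation with an intermediate passage to the $\maxid_x$-adic completion, but the substance is identical.
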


\begin{theorem}[\bf analytic] 
\label{analytic-result}
Let $X\ra Y$ be a flat map of complex analytic spaces, $t\in \Gamma(Y,\shO_Y)$ a non-zero-divisor and $Z\subset X$ a closed complex analytic subvariety so that $t$ is a non-zerodivisor also on $Z$.
Assume that $\shT^1_{X/Y}$ is annihilated by a power of $t$. 
Let $Y_k$ denote the complex analytic space given by $t=0$ with sheaf of rings $\shO_Y/(t^k)$, similarly for $X_k$.

There exists $N>0$ so that if $X'\ra Y$ is another flat map of complex analytic spaces, $Z'\subset X'$ a closed analytic subvariety, $\varphi:X'_k\cup Z'\ra X_k\cup Z$ an isomorphism over $Y$ for some $k>4N$ that restricts to an isomorphism $Z'\stackrel{\sim}\ra Z$ and $x\in X_k$ a point, then there are neighbourhoods $U,U'$ of $x$ in $X,X'$ respectively and an isomorphism $\varphi_x:U'\ra U$ that commutes with the maps to $Y$ and so that 
$\varphi_x$ and $\varphi$ agree when restricting to $U'\cap(X'_{k-2N}\cup Z').$
\end{theorem}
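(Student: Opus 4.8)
The plan is to deduce the analytic statement from the formal one (Theorem~\ref{formal-result}) by M.~Artin's approximation theorem for systems of analytic equations, in exact parallel to the way the \'etale statement (Theorem~\ref{etale-result}) is obtained from the formal one through the algebraic approximation theorem. The deformation-theoretic core of Theorem~\ref{formal-result} --- the successive lifting of $\varphi$ along the powers of $t$, controlled by the $t$-power torsion of $\shT^1_{X/Y}$ --- uses only that the rings involved are Noetherian, that $t$ is a non-zerodivisor, and that the map is flat, all of which hold for the analytic local rings $\shO_{X,x}$. Hence its proof applies verbatim to the analytic germs at $x$ and at the corresponding point $x'=\varphi^{-1}(x)$, producing a $t$-adic formal isomorphism $\widehat\varphi\colon\widehat{X}'\to\widehat{X}$ over $\widehat Y$ that carries $\widehat Z'$ to $\widehat Z$ and agrees with $\varphi$ on $\widehat Z'\cup X'_{k-2N}$; completing further at the point turns this into a formal power-series solution in $\widehat{\shO}_{X',x'}$ of the kind Artin's theorem accepts.

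First I would fix local models, realizing the germs $(Y,y)$, $(X,x)$, $(X',x')$ together with $Z$ and $Z'$ as zero loci of finitely many convergent power series in polydiscs, with $X\to Y$ and $X'\to Y$ induced by coordinate projections. In such coordinates an isomorphism $\varphi_x\colon U'\to U$ over $Y$ is encoded by a tuple of analytic function germs: the components of $\varphi_x$ together with those of a candidate inverse $\psi_x$, subject to a finite system of analytic equations expressing that $\varphi_x$ maps $X'$ into $X$ and $\psi_x$ maps $X$ into $X'$, that the two are mutually inverse, that both commute with the projections to $Y$, and that $\varphi_x$ matches the ideals of $Z$ and $Z'$. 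Carrying the inverse $\psi_x$ along as an unknown is what forces a solution to be a genuine isomorphism rather than a mere morphism; this is essential, since the spaces are not assumed smooth and no inverse function theorem is at hand.

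To build in the finite-order agreement, I would fix once and for all an analytic tuple $\tilde\varphi$ lifting $\varphi|_{X'_k\cup Z'}$ and search for $\varphi_x$ in the form $\tilde\varphi+t^{k-2N}u$, where $u$ is a new tuple of unknown analytic germs required to vanish on $Z'$. Then $\varphi_x\equiv\varphi$ modulo $t^{k-2N}$ automatically, and, since $t$ is a non-zerodivisor on $Z'$, the vanishing of $u$ on $Z'$ is exactly what makes $\varphi_x$ and $\varphi$ agree on $Z'$ as well. The formal isomorphism $\widehat\varphi$ of the first paragraph already satisfies this congruence and hence furnishes a formal solution $\widehat u$ of the resulting system. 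Artin's approximation theorem now produces a convergent solution $u$ agreeing with $\widehat u$ to high order on some neighbourhood $U'$ of $x$; the associated $\varphi_x=\tilde\varphi+t^{k-2N}u$ is the sought analytic isomorphism, commuting with the maps to $Y$, restricting to $\varphi$ on $Z'$, and agreeing with $\varphi$ on $U'\cap(X'_{k-2N}\cup Z')$.

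The step I expect to be the main obstacle is the faithful translation of these geometric demands into one finite system of analytic equations to which Artin's theorem genuinely applies: one must express ``isomorphism over $Y$ preserving $Z$ and $Z'$'' purely through the components of $\varphi_x$ and $\psi_x$ and the defining equations of the models, keep both the equations and the unknowns finite in number, and check that the datum coming from Theorem~\ref{formal-result} is a bona fide formal solution of precisely this system --- in particular that the mutual-inverse relations already hold formally. Once the problem is arranged so that ``solution $=$ isomorphism with the prescribed properties'', the passage from the formal to the convergent solution is exactly the content of Artin's theorem, and the order loss from $k$ to $k-2N$ is inherited unchanged from the formal result.
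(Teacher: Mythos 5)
Your proposal follows essentially the same route as the paper: the paper likewise deduces the analytic statement from Theorem~\ref{formal-result} by encoding the formal isomorphism $\widehat\varphi$ as a formal solution of a finite system of analytic equations (namely $f_i(a_1,\dots,a_n)=\sum_j b_{ij}g_j$ for presentations of the two germs) and then invoking Artin's analytic approximation theorem \cite[Theorem (1.2)]{artin2}. The only difference is one of bookkeeping in setting up the Artin system: the paper's system records merely that the approximating tuple defines a morphism over $Y$, leaving the isomorphism property and the agreement along $U'\cap(X'_{k-2N}\cup Z')$ implicit in the high-order closeness to the formal solution, whereas you build the inverse map and the congruence and $Z'$-constraints directly into the unknowns --- a slightly more explicit variant of the same argument.
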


We also prove the following useful criterion for the condition on the support of $\shT^1_{X/Y}$ as required in the above theorems.
\begin{lemma}[rigid nearby fibres]
\label{rigid-nearby-fibres}
Let $X\ra Y$ be a flat map of Noetherian schemes or of complex analytic spaces and $t$ a non-zero-divisor on $Y$.
If for all points $y\in U:=Y\setminus \{t=0\}$ we have $\shT^1_{X_y/y}=0$ for $X_y$ the fibre over $y$ then the support of $\shT^1_{X/Y}$ is contained in $t=0$.
\end{lemma}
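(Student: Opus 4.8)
The plan is to verify the support condition stalkwise and reduce to a statement about local rings. Since $T^1$ commutes with localization, to show $\supp\shT^1_{X/Y}\subseteq\{t=0\}$ it suffices to fix a point $x\in X$ lying over some $y\in U$ and prove that the stalk $(\shT^1_{X/Y})_x$ vanishes. Localizing $\shO_Y$ at $y$ and $\shO_X$ at $x$, I replace $X\ra Y$ by a flat local homomorphism $B\ra A$ of Noetherian local rings; write $\mathfrak m\subset B$ for the maximal ideal (so $\mathfrak m A\subseteq\mathfrak m_A$), $\kappa=B/\mathfrak m$ for the residue field, and $\bar A=A/\mathfrak m A=A\otimes_B\kappa$ for the local ring of the fibre $X_y$ at $x$. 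The hypothesis $\shT^1_{X_y/y}=0$ gives $T^1(\bar A/\kappa,\bar A)=0$, and the goal becomes $T^1(A/B,A)=0$. The first ingredient is the base change identity in the \emph{module} variable: because $A$ is flat over $B$, a finite Lichtenbaum--Schlessinger presentation of $A/B$ base changes to one of $\bar A/\kappa$, so by tensor--hom adjunction $T^1(\bar A/\kappa,\bar A)=T^1(A/B,\bar A)$ (see \cite{hartshorne}). Thus the hypothesis reads $T^1(A/B,\bar A)=0$; crucially this keeps the ring $A/B$ fixed and varies only the coefficient module, which lets me use the long exact sequence of $T^\bullet(A/B,-)$.

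Next I would climb the $\mathfrak m$-adic filtration. Flatness of $A$ over $B$ gives $\mathfrak m^iA/\mathfrak m^{i+1}A\cong(\mathfrak m^i/\mathfrak m^{i+1})\otimes_\kappa\bar A$, a finite direct sum of copies of $\bar A$, so additivity yields $T^1(A/B,\mathfrak m^iA/\mathfrak m^{i+1}A)=0$ for every $i$. Feeding the short exact sequences $0\ra\mathfrak m^iA/\mathfrak m^{i+1}A\ra A/\mathfrak m^{i+1}A\ra A/\mathfrak m^iA\ra 0$ into the long exact sequence of $T^\bullet(A/B,-)$ and inducting on $i$, with base case $A/\mathfrak m A=\bar A$, gives $T^1(A/B,A/\mathfrak m^iA)=0$ for all $i\ge 1$.

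Finally I would pass from the finite levels to $A$ itself by a Krull-intersection argument. Represent $T^\bullet(A/B,-)$ as $\Hom_A(L_\bullet,-)$ for a complex $L_\bullet$ of finite free $A$-modules, so $T^1(A/B,M)=H^1(\Hom_A(L_\bullet,M))$; let $Z^1\subseteq\Hom_A(L_1,A)$ be the cocycles and $B^1$ the coboundaries. The vanishing at level $i$ says that every cocycle becomes a coboundary modulo $\mathfrak m^i$, i.e. $Z^1\subseteq B^1+\mathfrak m^i\Hom_A(L_1,A)$ for all $i$. Since $A$ is Noetherian local and $\mathfrak m A\subseteq\mathfrak m_A$, Krull's intersection theorem applied to the finitely generated module $\Hom_A(L_1,A)/B^1$ gives $\bigcap_i\bigl(B^1+\mathfrak m^i\Hom_A(L_1,A)\bigr)=B^1$, whence $Z^1=B^1$ and $T^1(A/B,A)=0$. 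This is exactly the vanishing of the stalk, so $\supp\shT^1_{X/Y}\subseteq\{t=0\}$. The same argument applies verbatim to Noetherian schemes and to complex analytic spaces, whose local rings are Noetherian.

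I expect the main obstacle to be precisely the failure of naive base change. The tempting shortcut---that $\shT^1_{X/Y}$ restricted to a fibre should equal $\shT^1_{X_y/y}$---is false, because the universal-coefficient (hyper-$\Tor$) spectral sequence comparing $T^1(A/B,A)\otimes_A\bar A$ with $T^1(A/B,\bar A)$ carries genuine correction terms coming from $T^2(A/B,A)$, which need not vanish once one leaves the complete-intersection case that the paper explicitly wants to allow. The filtration-plus-Krull route above is designed to sidestep this entirely: it never commutes $\otimes$ past cohomology, relying only on additivity of $T^1$ in the coefficients, the long exact sequence of $T^\bullet(A/B,-)$, and a single limiting step.
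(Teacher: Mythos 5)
Your proof is correct, and it reaches the conclusion by a genuinely different mechanism than the paper's. Both arguments localize at a point $x$ over $y\in U$ and both start from flat base change in the form $T^1(X_y/y)=T^1(A/B,\,A\otimes_B\kappa(y))$, but from there they diverge. The paper \emph{descends}: writing $\mathfrak{m}_{Y,y}=(t_1,\dots,t_r)$, it extracts from the change-of-modules sequence for $0\to M/\Ann(t_i)\xrightarrow{\,t_i\,}M\to M/t_iM\to 0$ the inclusion $T^1(A/B,M)\otimes A/(t_i)\subseteq T^1(A/B,M/t_iM)$ and then kills the stalk of $\shT^1_{X/Y}$ by applying this together with Nakayama once per generator $t_i$. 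You instead \emph{ascend} the $\mathfrak{m}$-adic tower: additivity of $T^1(A/B,-)$ gives vanishing on the graded pieces $\mathfrak{m}^iA/\mathfrak{m}^{i+1}A\cong(\mathfrak{m}^i/\mathfrak{m}^{i+1})\otimes_\kappa\bar A$, the long exact sequence propagates vanishing to every $A/\mathfrak{m}^iA$, and Krull's intersection theorem, applied via the finite free terms $L_0,L_1$ of the Lichtenbaum--Schlessinger complex, converts vanishing at all finite levels into $Z^1=B^1$, i.e.\ $T^1(A/B,A)=0$. All the steps check out: the lift of the cobounding element uses only freeness of $L_0$, the identification $\Hom(L_1,\mathfrak{m}^iA)=\mathfrak{m}^i\Hom(L_1,A)$ uses freeness of $L_1$, and $\mathfrak{m}A\subseteq\mathfrak{m}_A$ puts you in the range of Krull. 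The paper's route is shorter and needs only coherence plus Nakayama; yours trades that for an explicit cocycle argument and a limiting step, but in exchange it is insensitive to whether the $t_i$ act as zero-divisors on the successive quotients (a point where the paper's displayed inclusion requires a little care) and, as you observe, it never commutes $\otimes$ past $T^1$ in the ring variable, which is exactly the trap this lemma is designed to avoid.
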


\begin{corollary}[\bf uniqueness of neighbourhoods of points in singular fibres] 
Let $\pi:X\ra Y$ be a morphism of Noetherian schemes (resp. complex analytic varieties) and $x\in X$ so that $y:=\pi(x)$ is a regular point, $n:=\dim\shO_{Y,y}$ and assume in some neighbourhood $V$ of $y$ all fibres over $V\setminus\{y\}$ are locally rigid (i.e. have vanishing $\shT^1$).
Then there is $N>0$ so that if $\pi':X'\ra Y$ is another morphism and for some $k>4N$,
$$\varphi:X'\times_Y \Spec\shO_{Y,y}/\frak{m}_{Y,y}^{nk}\ra X\times_Y \Spec\shO_{Y,y}/\frak{m}_{Y,y}^{nk}$$ 
an isomorphism, then there is an isomorphism $\varphi_x:U'\ra U$ over $Y$ for $U',U$ \'etale neighbourhoods of $x$ in $X',X$ (respectively open analytic neighbourhoods) so that $\varphi_x$ agrees with $\varphi$ over $\Spec\shO_{Y,y}/\frak{m}_{Y,y}^{k-2N}$.
\end{corollary}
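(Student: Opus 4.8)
The plan is to turn the fat infinitesimal neighbourhood $\Spec\mathcal O_{Y,y}/\mathfrak m_{Y,y}^{nk}$ into a genuine \emph{divisorial} thickening, so that the one-parameter Theorems~\ref{etale-result} and~\ref{analytic-result} (whose hypotheses are stated for a single non-zero-divisor $t$) become applicable. First I would shrink $Y$ to the neighbourhood $V$ of $y$ on which all fibres over $V\setminus\{y\}$ are rigid and, assuming as in Setup~\ref{the-setup} that $\pi$ is flat of finite type, invoke Lemma~\ref{rigid-nearby-fibres} to conclude that $\shT^1_{X/Y}$ is supported on $\pi^{-1}(y)$. We are then in the setting of the earlier theorems except that $y$ has codimension $n$ rather than lying on a divisor, which is exactly the mismatch to be resolved: the given datum $\varphi$ lives on a zero-dimensional fat point, whereas the theorems need an isomorphism over a positive-dimensional divisor thickening.

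The key geometric move is to blow up the regular point $y$: let $b\colon\tilde Y\to Y$ be the blow-up with exceptional divisor $E\cong\PP^{n-1}$, and form $\tilde X:=X\times_Y\tilde Y$ with projection $p\colon\tilde X\to X$. Since blowing up the maximal ideal of a regular local ring makes $\mathfrak m_{Y,y}\mathcal O_{\tilde Y}=\mathcal O_{\tilde Y}(-E)$ invertible, on each affine chart of $\tilde Y$ the divisor $E$ has a local equation $t$ that is a non-zero-divisor, and $\mathfrak m_{Y,y}^{nk}\mathcal O_{\tilde Y}=\mathcal O_{\tilde Y}(-nkE)=(t^{nk})$. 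Hence $\varphi\times_Y\tilde Y$ is an isomorphism of the $nk$-th $t$-adic thickenings of $\tilde X,\tilde X'$ over $\tilde Y$, of $t$-adic order $nk>4N$ as required. Because $b$ is an isomorphism over $Y\setminus\{y\}$, where the fibres are rigid, Lemma~\ref{rigid-nearby-fibres} gives $\shT^1_{\tilde X/\tilde Y}=0$ away from $E$, so $(\tilde X\to\tilde Y,t)$ fits Setup~\ref{the-setup} locally along $E$. (This blow-up reduction is what the factor $n$ accommodates: it guarantees more than enough $t$-adic order on every chart of $\tilde Y$; a direct restriction to the $t_1$-axis, where $\mathfrak m_{Y,y}^{nk}$ also becomes $(t_1^{nk})$, only recovers the isomorphism over a curve and does not suffice.)

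Applying Theorem~\ref{etale-result} (resp.\ Theorem~\ref{analytic-result}) at the points of a finite chart cover of the proper fibre $p^{-1}(x)=E$, I obtain \'etale (resp.\ analytic) neighbourhoods in $\tilde X,\tilde X'$ together with isomorphisms over $\tilde Y$ agreeing with $\varphi\times_Y\tilde Y$ modulo $t^{nk-2N}$. The last step is to descend along the contraction $p$: as $p$ is proper with connected fibre $E=p^{-1}(x)$ and $p_*\mathcal O_{\tilde X}=\mathcal O_X$ (here one uses normality of $X$), a neighbourhood of $E$ has the form $p^{-1}(U)$ for an open $U\ni x$, and an isomorphism over $\tilde Y$ defined there pushes forward to the desired $\varphi_x\colon U'\to U$ over $Y$. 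Pushing the congruence forward via $b_*\mathcal O_{\tilde Y}(-jE)=\mathfrak m_{Y,y}^{j}$ then converts agreement modulo $t^{nk-2N}$ into agreement modulo $\mathfrak m_{Y,y}^{k-2N}$, the asserted bound.

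The main obstacle is precisely this descent across the exceptional fibre. The theorems only produce local isomorphisms near each chosen point of $E\cong\PP^{n-1}$, so I must first glue them into a single isomorphism on a neighbourhood of all of $p^{-1}(x)$. On overlaps, two such isomorphisms differ by an automorphism of $\tilde X$ over $\tilde Y$ that is the identity modulo $t^{nk-2N}$; showing that such a high-order-trivial automorphism is genuinely trivial — so that the charts glue — is a finite-determinacy statement for automorphisms rather than for the family, and it is here that the rigidity $\shT^1_{\tilde X/\tilde Y}=0$ away from $E$ must be leveraged once more, together with properness of $E$. Carrying out this gluing, and checking that the glued isomorphism is compatible with the contraction so that $p_*$ really yields an isomorphism over $Y$, is the technical heart of the argument.
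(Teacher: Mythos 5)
Your blow-up strategy is a genuinely different route from the paper's, but it contains a gap that you flag yourself and then leave entirely unfilled, and it is a real one: the descent across the exceptional fibre. Theorems~\ref{etale-result} and~\ref{analytic-result} produce, at each chosen point of $p^{-1}(x)\cong\PP^{n-1}$, an isomorphism of small \'etale (resp.\ analytic) neighbourhoods that agrees with the pullback of $\varphi$ only \emph{modulo} $t^{nk-2N}$; on an overlap, two such local solutions therefore differ by an automorphism over $\tilde Y$ that is the identity modulo $t^{nk-2N}$ but need not be the identity (already for $z_0\cdots z_i=t^N$ the substitution $z_0\mapsto(1+t^{k-N}h)z_0$ is such a nontrivial high-order-trivial automorphism). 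Nothing in the paper provides a finite-determinacy or rigidity statement for automorphisms of the total space, so the charts do not glue for free, and without a single isomorphism on a full neighbourhood of the proper fibre $p^{-1}(x)$ the pushforward along $p$ cannot even be set up. Supplying this gluing would require a separate argument of difficulty comparable to the main theorems, so the proposal is not a complete proof for $n\ge 2$.

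The paper avoids the blow-up entirely and, contrary to your parenthetical dismissal, works coordinate by coordinate: writing $\mathfrak m_{Y,y}=(t_1,\dots,t_n)$ and using $\mathfrak m_{Y,y}^{nk}\subset(t_1^k,\dots,t_n^k)$, it first applies Theorem~\ref{etale-result} (resp.\ Theorem~\ref{analytic-result}) with $t=t_1$ to the base change of both families to $\Spec\shO_Y/(t_2^k,\dots,t_n^k)$ --- a thickened positive-dimensional subscheme, not the $t_1$-axis --- obtaining an isomorphism over that subscheme agreeing with $\varphi$ modulo $(t_1^{k-2N},t_2^k,\dots,t_n^k)$; it then iterates with $t=t_2,\dots,t_n$, feeding each output in as the next input, and concludes via $(t_1^{k-2N},\dots,t_n^{k-2N})\subset\mathfrak m_{Y,y}^{k-2N}$. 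Each step is a single application of the one-parameter theorem at the one point $x$, so no gluing over a positive-dimensional fibre ever arises.
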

\begin{proof} 
By Lemma~\ref{rigid-nearby-fibres}, there is $N>0$ so that $\frak{m}_{Y,y}^N$ annihilates $\shT^1_{X/Y}$. Assume $k>4N$.
Let $\frak{m}_{Y,y}=(t_1,...,t_n)$, then $\frak{m}^{nk}_{Y,y}\subset (t_1^k,...,t_n^k)$ and $(t_1^{k-2N},...,t_n^{k-2N})\subset \frak{m}^{k-2N}_{Y,y}$. 
We only do the case $n=2$, the general case is similar. 
We first apply Theorem~\ref{etale-result} (resp. Theorem~\ref{analytic-result}) to the base change of $X\ra Y$ to the closed subspace $Y_1$ of $Y$ given by 
the ideal $(t_2^k)$ for $t=t_1$ and $Z=\emptyset$. 
The result is an isomorphism $\varphi_1$ of neighbourhoods of $x$ in the base changes of $X'\ra Y$ and $X\ra Y$ to $Y_1$ that agrees with $\varphi$ modulo $(t_1^{k-2N},t_2^k)$.
In the second step, we apply the corresponding theorem to $X'\ra Y$ and $X\ra Y$ with $t=t_2$ using $\varphi_1$ as input and obtaining the desired isomorphism as a result.
\end{proof}


\section{Proofs of the local uniqueness results} \label{section-proofs}
For $B\ra A$ a map of rings and $M$ an $A$-module, there is an $A$-module $T^i(A/B,M)$ for $i=0,1,2$ defined in \cite{lichtenbaumschlessinger}. More generally, $T^i$ is the $i$th cohomology module of $\Hom(L^\bullet,M)$ for $L^\bullet$ a cotangent complex of $B\ra A$.
One sets $T^i_{A/B}:=T^i({A/B},A)$.
We recall some standard properties of these $T^i$ functors for later use, proofs for these can be found in \cite{lichtenbaumschlessinger} and \cite{hartshorne}. 

\subsection{$\mathbf{T^0}$} \label{Tzero-section}
For $B\ra A$ a map of rings and $M$ an $A$-module: $T^0(A/B,M)=\op{Der}_B(A,M)$.

\subsection{Smoothness} \label{smooth-section}
For $B\ra A$ smooth, one has $T^i(A/B,M)=0$ for all $M$ and $i>0$.

\subsection{Finite generatedness} \label{finite-generation}
For $B$ Noetherian, $A$ a finitely generated $B$-algebra and $M$ a finitely generated $A$-module, $T^i(A/B,M)$ is finitely generated for $i=0,1,2$.

\subsection{Closed embeddings} \label{close-embedding}
If $B\ra A$ is surjective with ideal $I$, then for all $M$, 
$$
\begin{array}{rcl}
T^0(A/B,M)&=&0,\\
T^1(A/B,M)&=&\Hom_A(I/I^2,M),
\end{array}
$$
and if $I$ is generated by a regular sequence then $T^2(A/B,M)=0$ for all $M$.

\subsection{Change of rings} \label{change-of-rings}
If $A\ra B\ra C$ are maps of rings and $M$ is a $C$-module, there is a long exact sequence of $C$-modules
$$
\begin{array}{lllllllll}
0&\ra &T^0(C/B,M)&\ra& T^0(C/A,M)&\ra& T^0(B/A,M)\\
&\ra &T^1(C/B,M)&\ra& T^1(C/A,M)&\ra& T^1(B/A,M)\\
&\ra &T^2(C/B,M)&\ra& T^2(C/A,M)&\ra& T^2(B/A,M).\\
\end{array}
$$

\subsection{Change of modules} \label{change-of-modules}
If $B\ra A$ is a map of rings and $0\ra M\ra M'\ra M''\ra 0$ an exact sequence of $A$-modules, there is a long exact sequence
$$
\begin{array}{lllllllll}
0&\ra &T^0(A/B,M)&\ra& T^0(A/B,M')&\ra& T^0(A/B,M'')\\
&\ra &T^1(A/B,M)&\ra& T^1(A/B,M')&\ra& T^1(A/B,M'')\\
&\ra &T^2(A/B,M)&\ra& T^2(A/B,M')&\ra& T^2(A/B,M'').\\
\end{array}
$$

\subsection{Base change} \label{lemma-basechange}
Let $A$ be a flat $B$-algebra, $B'$ any $B$-algebra, $A'=A\otimes_B B'$ and $M'$ an $A'$-module then
$$T^1(A/B,M')=T^1(A'/B',M').$$

\subsection{Preparations for proving the statements in \S\ref{section-main-results}}
\label{subsec-prep-proof}
Assume now we are in a situation as in Setup~\ref{the-setup}. 
Set $B=\Gamma(Y,\shO_Y)$, $A=\Gamma(X,\shO_X)$, $A'=\Gamma(X',\shO_X')$.
Let $I\subset A$ be the ideal of $Z$ and $I'\subset A'$ be the ideal of $Z'$.
Since $t$ is assumed to be a non-zerodivisor in $A/I$, we have $I\cap (t^l)=t^lI$ for every $l$. Similarly, $I'\cap (t^l)=t^lI'$ for every $l$.
The given isomorphism $\varphi$ at ring level, we also call $\varphi:A/t^kI\ra A'/t^kI'$.

Since $T^1({A/B},I)$ is finitely generated by \S\ref{finite-generation}, 
by assumption there is $N>0$ such that $t^NT^1({A/B},I)=0$.
Let $\ker_{t^l}T^2({A/B},I)$ denote the kernel of the endomorphism of $T^2({A/B},I)$ given by multiplication by $t^l$.
Since also $T^2({A/B},I)$ is finitely generated by \S\ref{finite-generation}, the sequence 
\begin{equation}
\label{increasing-sequence-to-define-stable}
\ker_{t}T^2({A/B},I)\subseteq \ker_{t^2}T^2({A/B},I)\subseteq \ker_{t^3}T^2({A/B},I)\subseteq ...
\end{equation}
stabilizes and, by increasing $N$ if needed, we may assume $\ker_{t^N}T^2({A/B},I)$ is this stable $A$-module.

\begin{lemma}
\label{lemma-shift}
Let $L,l\ge 0$ be given, we have that 
$I/t^{L}I$ and $t^{l}I/t^{l+L}I$ are isomorphic $A$-modules.
\end{lemma}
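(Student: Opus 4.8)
The plan is to exhibit the isomorphism explicitly as multiplication by $t^l$, and then check that it is compatible with the two submodules being quotiented out. Since $X\ra Y$ is flat and $t$ is a non-zero-divisor on $Y$, the pull-back of $t$ is a non-zero-divisor on $X$ (as already noted in Setup~\ref{the-setup}), so multiplication by $t^l$ is an injective endomorphism of $A$, and in particular its restriction to the ideal $I$ is injective. Its image is by definition $t^lI$, so multiplication by $t^l$ gives an isomorphism of $A$-modules $\mu\colon I\xrightarrow{\sim} t^lI$.

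Next I would check that $\mu$ carries the submodule $t^LI\subseteq I$ precisely onto $t^{l+L}I\subseteq t^lI$. The equality $\mu(t^LI)=t^l\cdot t^LI=t^{l+L}I$ is immediate from the definitions. Since an isomorphism that restricts to an isomorphism of submodules induces an isomorphism on the quotients, $\mu$ then descends to the desired isomorphism $I/t^LI\xrightarrow{\sim} t^lI/t^{l+L}I$.

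The only point that genuinely uses the hypothesis on $t$—and hence the only conceivable obstacle—is that the descended map be well-defined and injective, i.e.\ that the $\mu$-preimage of $t^{l+L}I$ is exactly $t^LI$ rather than something larger. Concretely, if $x\in I$ satisfies $t^lx=t^{l+L}y$ for some $y\in I$, then $t^l(x-t^Ly)=0$, and injectivity of multiplication by $t^l$ on $A$ forces $x=t^Ly\in t^LI$. This is precisely where the non-zero-divisor property of $t$ enters, and it is what guarantees that the quotient map has trivial kernel. No Noetherian or finiteness hypotheses are needed here; the statement is a purely formal consequence of $t$ being a non-zero-divisor on $A$.
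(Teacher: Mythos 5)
Your proof is correct and takes essentially the same route as the paper: both exhibit the isomorphism as multiplication by $t^l$, using that $t$ is a non-zero-divisor to get injectivity, and then pass to quotients (the paper packages the descent step as a comparison of two short exact sequences rather than checking the preimage condition directly, but the content is identical).
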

\begin{proof} Since $t$ is a non-zerodivisor, multiplication by $t^l$ is injective in $I$ and thus yields an isomorphism $I\cong t^lI$. The first two vertical maps in the following diagram are isomorphisms, hence also the third one is
$$
\xymatrix{
0\ar[r] & t^LI \ar[r]\ar_{\cdot t^l}[d] & I \ar_{\cdot t^l}[d] \ar[r] & I/t^LI \ar@{-->}_{\cdot t^l}[d] \ar[r] & 0\\
0\ar[r] & t^{l+L}I \ar[r] & t^lI \ar[r] & t^lI/t^{l+L}I \ar[r] & 0.
}$$
\end{proof}

By \S\ref{change-of-modules}, the short exact sequence of $A$-modules
$0\ra I\stackrel{\cdot t^L}\lra I \ra I/t^LI \ra 0$
induces a long exact sequence 
\begin{equation} \label{eq-les-talpha}
 ...\ra T^1({A/B},I) \stackrel{\cdot t^L}{\lra} T^1({A/B},I)\ra T^1({A/B},I/t^LI) \ra T^2({A/B},I) \stackrel{\cdot t^L}{\lra} T^2({A/B},I)   
\end{equation}
and thus for $L>N$ a short exact sequence
$$0\ra T^1({A/B},I)\ra T^1({A/B},I/t^LI) \ra \ker_{t^N}T^2({A/B},I)\ra 0.$$
Since multiplication by $t^N$ acts on this sequence and is zero on the outer terms, we conclude the following result.
\begin{lemma} 
For $L\ge N$ we have $t^{2N}T^1({A/B},I/t^{L}I)=0$ and hence furthermore, for any $l\ge 2L$, by Lemma~\ref{lemma-shift},
$$
t^{2N}T^1({A/B},t^{l-L}I/t^{l}I)=0
$$
\end{lemma}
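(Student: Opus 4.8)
The plan is to read the claim off directly from the short exact sequence
$$0\ra T^1({A/B},I)\ra T^1({A/B},I/t^LI) \ra \ker_{t^N}T^2({A/B},I)\ra 0$$
that has just been produced from the long exact sequence \eqref{eq-les-talpha} of \S\ref{change-of-modules} attached to $0\ra I\stackrel{\cdot t^L}\lra I \ra I/t^LI \ra 0$. First I would record why this sequence is available for all $L\ge N$ and not merely for $L$ strictly larger: since $t^NT^1({A/B},I)=0$, multiplication by $t^L$ already kills $T^1({A/B},I)$ once $L\ge N$, so the map $T^1({A/B},I)\stackrel{\cdot t^L}\lra T^1({A/B},I)$ is zero and the image of the connecting map out of $T^1({A/B},I/t^LI)$ is exactly the $t^L$-torsion $\ker(\cdot t^L)$ of $T^2({A/B},I)$, which coincides with the stable module $\ker_{t^N}T^2({A/B},I)$ because the chain \eqref{increasing-sequence-to-define-stable} has stabilized by step $N$.

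The core of the argument is then the elementary fact that an extension of two $t^N$-torsion modules is $t^{2N}$-torsion. Concretely, for $m\in T^1({A/B},I/t^LI)$ I would first push $m$ to the quotient $\ker_{t^N}T^2({A/B},I)$; its image there is annihilated by $t^N$, so $t^Nm$ lies in the image of the submodule $T^1({A/B},I)$. Since that submodule is itself annihilated by $t^N$, we get $t^{2N}m=t^N(t^Nm)=0$. As $m$ was arbitrary, this yields $t^{2N}T^1({A/B},I/t^LI)=0$, the first assertion.

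For the second assertion I would invoke functoriality of $T^1({A/B},-)$ in its module argument. Lemma~\ref{lemma-shift}, applied with shift $l-L$ and length $L$, supplies an isomorphism of $A$-modules $t^{l-L}I/t^lI\cong I/t^LI$; the hypothesis $l\ge 2L$ guarantees $l-L\ge L\ge 0$ so that this instance of the lemma is legitimate. Transporting along the induced isomorphism $T^1({A/B},t^{l-L}I/t^lI)\cong T^1({A/B},I/t^LI)$ shows the left-hand side is annihilated by $t^{2N}$ as well.

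There is no genuinely hard step here: the whole lemma is a torsion-order bookkeeping consequence of the previously established exact sequence. The only points deserving a word of care are (i) that $L\ge N$—rather than the strict inequality appearing just above the statement—already produces the short exact sequence, and (ii) that the shift used in Lemma~\ref{lemma-shift} lands in the admissible range; both are immediate from the choice of $N$ making \eqref{increasing-sequence-to-define-stable} stable.
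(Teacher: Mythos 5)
Your proof is correct and follows essentially the same route as the paper: the paper also extracts the short exact sequence $0\ra T^1({A/B},I)\ra T^1({A/B},I/t^LI)\ra \ker_{t^N}T^2({A/B},I)\ra 0$ from \eqref{eq-les-talpha} and concludes by noting that multiplication by $t^N$ kills both outer terms, then transports the statement along the isomorphism of Lemma~\ref{lemma-shift}. Your extra remark that $L\ge N$ (not just $L>N$) already yields the sequence is a correct and worthwhile clarification of a small imprecision in the paper's phrasing.
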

Set $A_L=A/t^LA$ and similarly for $A'_L,B_L$.
Note that $t^{l-L}I/t^{l}I\cong I/t^{L}I$ is an $A_L$-module. 
If we assume that $k\ge L$ then base change \S\ref{lemma-basechange} gives 
\begin{equation*}
\resizebox{\textwidth}{!}{$T^1({A/B},t^{l-L}I/t^{l}I)=T^1({A_L/B_L},t^{l-L}I/t^{l}I) \stackrel\varphi= T^1({A'_L/B_L},t^{l-L}I'/t^{l}I')=T^1({A'/B},t^{l-L}I'/t^{l}I'),$}
\end{equation*}
so we conclude the following lemma.
\begin{lemma} 
\label{annnili1}
For $k\ge L\ge N$ and $l\ge 2L$, we have
$$
t^{2N}T^1({A'/B},t^{l-L}I'/t^{l}I')=0
$$
\end{lemma}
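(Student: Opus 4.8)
The plan is to deduce the statement from the annihilation on the \emph{unprimed} side, which was just established, by transporting it across $\varphi$ using the base-change invariance of $T^1$. Indeed, the immediately preceding (unlabelled) lemma gives $t^{2N}T^1(A/B, t^{l-L}I/t^l I)=0$ for $L\ge N$ and $l\ge 2L$, so it is enough to exhibit a natural isomorphism
$$T^1(A/B,\, t^{l-L}I/t^l I)\cong T^1(A'/B,\, t^{l-L}I'/t^l I').$$

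First I would record that, by Lemma~\ref{lemma-shift}, $t^{l-L}I/t^l I\cong I/t^L I$ and $t^{l-L}I'/t^l I'\cong I'/t^L I'$; in particular both modules are killed by $t^L$, so the first is an $A_L$-module and the second an $A'_L$-module, where $A_L=A/t^LA$ and $A'_L=A'/t^LA'$. Since $A$ and $A'$ are flat over $B$ and $A_L=A\otimes_B B_L$, $A'_L=A'\otimes_B B_L$, the base-change property \S\ref{lemma-basechange} applies verbatim and replaces $T^1(A/B,-)$ by $T^1(A_L/B_L,-)$ on $t^{l-L}I/t^l I$, and symmetrically on the primed side. This reduces everything to an identification of the two $T^1$'s computed over the truncated rings.

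The decisive input is $\varphi$. Because $k\ge L$ one has $t^kI\subseteq t^LA$, so the $B$-algebra isomorphism $\varphi\colon A/t^kI\to A'/t^kI'$ descends modulo $t^L$: as $t\in B$ and $\varphi$ is surjective, $\varphi$ carries $t^L\cdot(A/t^kI)$ isomorphically onto $t^L\cdot(A'/t^kI')$, hence induces a $B_L$-algebra isomorphism $A_L\cong A'_L$. Since $\varphi$ restricts to an isomorphism $Z'\stackrel{\sim}{\ra}Z$, it sends $I/t^kI$ to $I'/t^kI'$, and therefore the induced map $A_L\cong A'_L$ matches the ideal image $I/t^LI$ with $I'/t^LI'$. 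As multiplication by $t^{l-L}$ commutes with the $B$-linear map $\varphi$, the shift isomorphisms of Lemma~\ref{lemma-shift} are compatible, so $\varphi$ identifies $t^{l-L}I/t^l I$ with $t^{l-L}I'/t^l I'$ as modules over the identified rings. Functoriality of $T^1$ under this pair of isomorphisms gives $T^1(A_L/B_L, t^{l-L}I/t^l I)=T^1(A'_L/B_L, t^{l-L}I'/t^l I')$, and undoing the two base changes produces the isomorphism displayed above; the annihilation by $t^{2N}$ transports along it.

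I expect the only real work to be the bookkeeping around $\varphi$: checking that an isomorphism given merely modulo $t^kI$ genuinely descends to an isomorphism $A_L\cong A'_L$ of $B_L$-algebras and respects the ideals $I$, $I'$ down to the $t^L$-level. The hypothesis $k\ge L$ is precisely what makes reduction modulo $t^L$ well defined, and the assumption that $\varphi$ restricts to an isomorphism $Z'\cong Z$ is exactly what forces compatibility with $I$ and $I'$; granting these two compatibilities, the base-change steps and the functoriality of $T^1$ are formal.
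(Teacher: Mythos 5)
Your proof is correct and follows essentially the same route as the paper: reduce via Lemma~\ref{lemma-shift} and base change (\S\ref{lemma-basechange}) to $T^1(A_L/B_L,\cdot)$, transport across the isomorphism induced by $\varphi$ (using $k\ge L$ and the compatibility with $Z$, $Z'$), and undo the base change. The paper compresses all of this into a single displayed chain of equalities; your write-up merely makes the descent of $\varphi$ to $A_L\cong A'_L$ and its compatibility with the ideals explicit.
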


Now let 
\begin{equation}
\label{eq-presentation}
A=B[z_1,\ldots,z_n]/(f_1,\ldots, f_m)
\end{equation}
be a presentation as a $B$-algebra. 
Let $\reallywidehat{B[z_1,...,z_n]}$, $\widehat A'$, $\widehat B$ denote the completions in $t$ of $B[z_1,...,z_n]$, $ A'$, $B$ respectively.
Let $\hat{z_i}\in A'$ be a lift of the image of $z_i$ under the composition $A\sra A/t^k I \stackrel{\varphi}{\lra} A'/t^kI'$.
\begin{lemma} 
\label{lemma-Acirc}
The map $\pi:\reallywidehat{B[z_1,...,z_n]}\ra \widehat A', z_i\mapsto \hat z_i$ is surjective.
\end{lemma}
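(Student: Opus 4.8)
The plan is to reduce the assertion to the much weaker statement that $\pi$ is surjective modulo $t$, and then to deduce that statement directly from the fact that $\varphi$ is an isomorphism. The reduction rests on the observation that both $\reallywidehat{B[z_1,\ldots,z_n]}$ and $\widehat A'$ are $t$-adically complete and separated, so a standard successive-approximation argument upgrades surjectivity mod $t$ to honest surjectivity.

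First I would set up the approximation. Given a target element $s\in\widehat A'$, I would build a preimage one $t$-order at a time: writing $s_0=s$, and having produced $s_j\in\widehat A'$, surjectivity mod $t$ yields $r_j\in\reallywidehat{B[z_1,\ldots,z_n]}$ with $s_j-\pi(r_j)\in t\widehat A'$, say $s_j-\pi(r_j)=t\,s_{j+1}$. Because the source is $t$-adically complete, the series $r=\sum_{j\ge 0}t^j r_j$ converges, and by construction $s-\sum_{j<n}t^j\pi(r_j)\in t^n\widehat A'$ for every $n$; since $\pi$ is $t$-adically continuous and $\widehat A'$ is separated, this forces $\pi(r)=s$.

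It then remains to check surjectivity mod $t$, which is where the hypotheses enter. Reducing modulo $t$ identifies $\pi$ with the map $(B/tB)[z_1,\ldots,z_n]\ra A'/tA'$ sending $z_i\mapsto \hat z_i$, using $\reallywidehat{B[z_1,\ldots,z_n]}/t\cong (B/tB)[z_1,\ldots,z_n]$ and $\widehat A'/t\cong A'/tA'$. Now the composition $A\sra A/t^kI\stackrel{\varphi}{\lra}A'/t^kI'$ is surjective (as $\varphi$ is an isomorphism over $Y$) and carries $z_i$ to $\hat z_i$ modulo $t^kI'$; hence $A'/t^kI'$ is generated as a $B$-algebra by the classes of the $\hat z_i$. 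As $k\ge 1$ we have $t^kI'\subseteq tA'$, so $A'/tA'$ is a further quotient of $A'/t^kI'$ and is therefore generated over $B/tB$ by the images of the $\hat z_i$. This is precisely the surjectivity of $\pi$ mod $t$.

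The argument is essentially formal; the one point I expect to require genuine care — and thus the main obstacle — is the bookkeeping in the completeness step, in particular verifying that $\pi$ is continuous for the $t$-adic topologies and that $\widehat A'$ is $t$-adically separated, so that the limiting identity $\pi(r)=s$ is legitimate and not merely an identity of all finite truncations.
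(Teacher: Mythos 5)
Your proof is correct. Both arguments ultimately rest on the same input -- that the $\hat z_i$ generate the relevant truncation of $A'$ as a $B$-algebra, because $\varphi$ is a $B$-algebra isomorphism -- but the way you pass from that to surjectivity of $\pi$ is genuinely different from the paper's. The paper iterates the relation $a'-t^kb=F$ to prove surjectivity of \emph{every} finite truncation $B[z_1,\ldots,z_n]/(t^l)\ra A'_l$, then shows the inverse system of kernels $J_l$ has surjective transition maps, and finally invokes exactness of $\liminv$ for such systems to conclude. You instead prove only surjectivity modulo $t$ (which is immediate from $t^kI'\subseteq tA'$) and upgrade it by the standard successive-approximation lemma for a map from a $t$-adically complete source to a $t$-adically separated target. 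Your route is shorter and sidesteps the Mittag--Leffler bookkeeping entirely; what the paper's longer argument buys is the explicit identification of the kernel $J$ as $\liminv_l J_l$ together with the short exact sequence $0\ra J\ra \reallywidehat{B[z_1,\ldots,z_n]}\ra \widehat A'\ra 0$ at every finite level, though only the surjectivity statement is actually used downstream (in Proposition~\ref{lemma-gi}, where the needed exact sequence is re-derived from flatness anyway). The two points you flag as requiring care -- continuity of $\pi$ for the $t$-adic topologies (clear, since $\pi(t)=t$) and $t$-adic separatedness and completeness of the Noetherian completions (standard, e.g.\ \cite[Prop.~10.13, 10.15]{AM}) -- do hold, so there is no gap.
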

\begin{proof} The proof hinges on the following property. 
For any $a'\in A'$, there is $b\in A'$ and a polynomial $F$ in the $\hat z_i$ with coefficients in $B$ so that
$$a'-t^kb=F.$$
This property follows from the fact that, by construction, the $\hat{z_i}$ generate $A'_k$ as a $B$-algebra.
In fact, by iterating, i.e. reapplying the property to $b$ and so forth, we have the more general property that
for any $a'\in A'$ and $s\ge 1$, there is $b\in A'$ and a polynomial $F$ in the $\hat z_i$ with coefficients in $B$ so that
\begin{equation}
\label{eq-shift-by-k}
a'-t^{sk}b=F.
\end{equation}

Let $J$ denote the kernel of $\pi$ and set $J_l:=\ker\big(B[z_1,...,z_n]/(t^l)\ra A'_l\big)$. 
We claim that the natural map $J_l\ra J_{l-1}$ is surjective for all $l$. 
Indeed, let $\bar a\in J_{l-1}$ be given and represented by $a\in B[z_1,...,z_n]$.
Hence, $\pi(a)=t^{l-1}a'$ for some $a'\in A'$. By \eqref{eq-shift-by-k}, 
$t^{l-1}a'=t^{l+k-1}B$ for some $B\in A'$, so $a$ descends to an element of $J_l$ and this maps to $\bar a$.

Since the completion functor is left exact, we have $J=\liminv_l J_l$. 
If we show the surjectivity of
$B[z_1,...,z_n]/(t^l)\ra A'_l$ for all $l\ge 0$, then we have for all $l$ an exact sequence
$$0\ra J_l\ra B[z_1,...,z_n]/(t^l)\ra A'_l\ra 0.$$
Since the inverse system $J_l$ has surjective projections as we just verified, taking $\liminv_l$ on of the exact sequence yields again an exact sequence which is in fact 
$0\ra J\ra \reallywidehat{B[z_1,...,z_n]}\ra \widehat A'\ra 0$
thus the assertion of the Lemma follows.
Hence, it suffices to prove that $B[z_1,...,z_n]/(t^l)\ra A'_l$ is surjective for all $l\ge 0$. 
Now let any $l\ge 0$ and an element $a'\in A'$ be given. 
Choose $s$ so that $sk\ge l$.
By \eqref{eq-shift-by-k}, we find
$b\in A'$ and $F$ a polynomial in the $\hat z_i$ so that
$a'-t^{sk}b=F$.
We conclude that the projection of $a'$ in $A_l$ is the image of $ F(z_1,...,z_n)\in B[z_1,...,z_n]$. This proves the desired surjectivity.
\end{proof}

\begin{lemma}  \label{lemma-flatness-completion}
$\widehat B\ra \widehat A'$ is flat.
\end{lemma}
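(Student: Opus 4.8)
The plan is to derive flatness of $\widehat B\ra\widehat A'$ from the flatness of $B\ra A'$ (which holds because $X'\ra Y$ is flat) by comparing the two rings modulo every power of $t$ and then invoking a flatness criterion over $t$-adically complete Noetherian rings. Both $\widehat B$ and $\widehat A'$ are $t$-adically complete and separated, being completions; $\widehat B$ is Noetherian, and $\widehat A'$ is Noetherian because by Lemma~\ref{lemma-Acirc} it is a quotient of the Noetherian ring $\reallywidehat{B[z_1,\ldots,z_n]}$. Moreover $\widehat B/t^n\widehat B=B/t^nB$ and $\widehat A'/t^n\widehat A'=A'/t^nA'$ for all $n$.

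First I would check the finite-level flatness. Since flatness is stable under base change along $B\ra B/t^nB$, the ring $\widehat A'/t^n\widehat A'=A'\otimes_B B/t^nB$ is flat over $\widehat B/t^n\widehat B=B/t^nB$ for every $n$, in particular for $n=1$. I would also record that $t$ is a non-zero-divisor on $\widehat A'$: it is one on $A'$ by flatness of $A'$ over $B$ together with $t$ being a non-zero-divisor on $B$ (as in Setup~\ref{the-setup}), and since $A'\ra\widehat A'$ is flat, multiplication by $t$ stays injective after completion. Consequently, using that $t$ is likewise a non-zero-divisor on $\widehat B$, so that $0\ra\widehat B\stackrel{\cdot t}{\ra}\widehat B\ra\widehat B/t\widehat B\ra 0$ is a free resolution, one obtains $\Tor_1^{\widehat B}(\widehat B/t\widehat B,\widehat A')=\ker(\cdot t\colon\widehat A'\ra\widehat A')=0$.

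With the mod-$t$ flatness and this vanishing of $\Tor_1$ in hand, flatness of $\widehat A'$ over $\widehat B$ follows from the local (infinitesimal) criterion of flatness applied to the ideal $(t)\subset\widehat B$. The step that requires care --- and which I expect to be the main obstacle --- is the hypothesis under which that criterion upgrades vanishing of $\Tor_1$ and flatness modulo $t$ to genuine flatness, namely that $\widehat A'$ is $t$-adically ideally separated as a $\widehat B$-module (each $\mathfrak a\otimes_{\widehat B}\widehat A'$ is $t$-adically separated for $\mathfrak a\subseteq\widehat B$ finitely generated). This is not automatic, since $\widehat A'$ is not a finite $\widehat B$-module; however I would supply it as follows: for $\mathfrak a=(a_1,\ldots,a_r)$ the module $\mathfrak a\otimes_{\widehat B}\widehat A'$ is finitely generated over the Noetherian $t$-adically complete ring $\widehat A'$, and the $t$-adic filtration it inherits from $\widehat B$ agrees with the one from $\widehat A'$, so Krull's intersection theorem gives $\bigcap_n t^n(\mathfrak a\otimes_{\widehat B}\widehat A')=0$.

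Alternatively, and more cleanly when $A'$ is of finite type over $B$, the whole separatedness issue can be bypassed: then $C:=A'\otimes_B\widehat B$ is of finite type over the Noetherian ring $\widehat B$, hence Noetherian, one has $C/t^nC=A'/t^nA'$ and therefore $\widehat A'=\liminv_n C/t^nC$ is the $t$-adic completion of $C$, and flatness follows by composing the base-change-flat map $\widehat B\ra C$ with the completion map $C\ra\widehat A'$, the latter being flat as the completion of a Noetherian ring.
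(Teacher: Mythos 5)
Your proposal is correct. Note first that your closing ``alternative'' is, almost verbatim, the paper's own proof: the paper factors $\widehat B\ra \widehat A'$ as the base-change map $\widehat B\ra A'\otimes_B\widehat B$ (flat because $B\ra A'$ is) followed by the completion map $A'\otimes_B\widehat B\ra (A'\otimes_B\widehat B)^{\widehat{\,}}$ (flat by \cite[Prop.~10.14]{AM}), and identifies the target with $\widehat A'$ by matching the inverse systems $(A'\otimes_B\widehat B)/t^l(A'\otimes_B\widehat B)\cong A'/t^lA'$. Your primary argument is a genuinely different route: instead of factoring the map, you verify the hypotheses of the local criterion of flatness for the ideal $(t)\subset\widehat B$ --- flatness of $\widehat A'/t\widehat A'$ over $\widehat B/t\widehat B$ by base change, vanishing of $\Tor_1^{\widehat B}(\widehat B/t\widehat B,\widehat A')$ because $t$ remains a non-zero-divisor on $\widehat A'$, and the ideally-separated hypothesis via Krull's intersection theorem over the Noetherian, $t$-adically complete ring $\widehat A'$. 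Both arguments are sound. The paper's factorization is shorter and keeps the bookkeeping to a minimum, though its assertion that ``all rings involved are Noetherian'' for $A'\otimes_B\widehat B$ implicitly draws on $A'$ being of finite type over $B$; your main route only ever invokes properties of $\widehat A'$ itself (Noetherian and complete, e.g.\ as a quotient of $\reallywidehat{B[z_1,\ldots,z_n]}$ by Lemma~\ref{lemma-Acirc}), at the cost of having to check the separatedness hypothesis of the local criterion, which you do correctly.
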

\begin{proof} By assumption $B\ra A'$ is flat and thus the base change $\widehat B\ra A'\otimes_B \widehat B$ is also flat. 
Since all rings involved are Noetherian, $A'\otimes_B \widehat B \ra (A'\otimes_B \widehat B)^{\widehat{\,}}$ is also flat, e.g. by \cite[Proposition 10.14]{AM}. Since compositions of flat maps are flat, we are done if we show that 
$\widehat A'\cong (A'\otimes_B \widehat B)^{\widehat{\,}}$. This follows by identifying the inverse systems that give these inverse limits,
$$
(A'\otimes_B \widehat B) / t^l (A'\otimes_B \widehat B)
\cong (A'\otimes_B \widehat B)\otimes_{\widehat B} B/t^l B
\cong A'\otimes_B (B/t^l B)
\cong A'/t^l A'.
$$
\end{proof}

\begin{proposition}[Lifting equations]
\label{lemma-gi}
We find $g_1,...,g_m\in \reallywidehat{B[z_1,...,z_n]}$, so that
$$\widehat A'= \reallywidehat{B[z_1,...,z_n]}/(f_1+t^{k}g_1,\,\,...\,\,,f_m+t^kg_m).$$
\end{proposition}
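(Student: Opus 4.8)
The plan is to construct the $g_i$ by lifting, so that each $f_i+t^kg_i$ lands in $J:=\ker\pi$ and hence defines a surjection onto $\widehat A'$, and then to promote this surjection to an isomorphism by reducing modulo $t$ and bootstrapping with $t$-adic completeness. Throughout write $P:=B[z_1,\ldots,z_n]$ and $\widehat P:=\reallywidehat{B[z_1,\ldots,z_n]}$.

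First I would produce the $g_i$. Since $\varphi$ is a $B$-algebra map and $f_i(z_1,\ldots,z_n)=0$ in $A$, applying $\varphi$ to this relation in $A/t^kI$ shows that $\pi(f_i)=f_i(\hat z_1,\ldots,\hat z_n)$ vanishes in $A'/t^kI'$, hence lies in $t^k\widehat A'$; write $\pi(f_i)=t^kh_i$ with $h_i\in\widehat A'$. Using the surjectivity of $\pi$ (Lemma~\ref{lemma-Acirc}) I pick $g_i\in\widehat P$ with $\pi(g_i)=-h_i$, so that $\tilde f_i:=f_i+t^kg_i\in J$. This produces a surjection $\rho:C\sra\widehat A'$ with $C:=\widehat P/(\tilde f_1,\ldots,\tilde f_m)$, and everything then reduces to showing that $\rho$ is injective.

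To control the kernel $K:=\ker\rho$ I would reduce modulo $t$. Because $k\ge1$ we have $\tilde f_i\equiv f_i\pmod t$, and since $\widehat P/t\widehat P=(B/tB)[z_1,\ldots,z_n]$ this identifies $C/tC$ with $A/tA$ and $\widehat A'/t\widehat A'$ with $A'/tA'$; under these identifications $\rho\bmod t$ sends $\bar z_i\mapsto\overline{\hat z_i}$ and is therefore the reduction $\bar\varphi$ of the isomorphism $\varphi$, hence itself an isomorphism. Applying $-\otimes_{\widehat B}\widehat B/t\widehat B$ to $0\to K\to C\to\widehat A'\to0$ and invoking the flatness of $\widehat B\to\widehat A'$ (Lemma~\ref{lemma-flatness-completion}) to kill $\Tor_1^{\widehat B}(\widehat A',\widehat B/t\widehat B)$, I obtain the exact sequence $0\to K/tK\to C/tC\to\widehat A'/t\widehat A'\to0$; since the right-hand map is an isomorphism this forces $K=tK$. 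Finally, $P$ being Noetherian makes $\widehat P$ Noetherian and $t$-adically complete, hence the quotient $C$ is $t$-adically separated, and $K=t^nK\subseteq t^nC$ for all $n$ gives $K=0$, so $\rho$ is an isomorphism.

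The step I expect to be the main obstacle is exactly the injectivity of $\rho$, i.e. the inclusion $J\subseteq(\tilde f_1,\ldots,\tilde f_m)$: producing the $g_i$ is purely formal, but pinning down the kernel requires combining the mod-$t$ identification $C/tC\cong\widehat A'/t\widehat A'$ coming from $\bar\varphi$, the flatness input of Lemma~\ref{lemma-flatness-completion}, and the $t$-adic separatedness of $C$ for the concluding Krull-intersection argument.
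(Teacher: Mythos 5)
Your proposal is correct and follows essentially the same route as the paper: both rest on the surjectivity from Lemma~\ref{lemma-Acirc}, the flatness from Lemma~\ref{lemma-flatness-completion} to kill the relevant $\Tor_1$, and a Krull-intersection/completeness argument showing the kernel $K$ (the paper's $R=J/J'$) satisfies $K=tK$ and hence vanishes. The only cosmetic differences are that you produce the $g_i$ by directly evaluating $f_i$ at the $\hat z_i$ rather than via the ideal identity $J+(t^k)=(f_1,\ldots,f_m)+(t^k)$, and you run the kernel argument modulo $t$ rather than modulo $t^k$; neither changes the substance.
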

\begin{proof} 
By Lemma~\ref{lemma-Acirc}, the map $\reallywidehat{B[z_1,...,z_n]}\ra \widehat A', z_i\mapsto \hat z_i$ is surjective, let $J$ be the kernel. 
By Lemma~\ref{lemma-flatness-completion},
$\widehat A'$ is flat over $\widehat B$, hence
$\operatorname{Tor}^{\widehat B}_1(B_{k},\widehat A')=0$.
Tensoring the exact sequence of $\widehat B$-modules
$$0\ra J \ra  \reallywidehat{B[z_1,...,z_n]}\ra \widehat A'\ra 0$$
with $B_k$ thus yields again an exact sequence
$$
0\ra J/t^kJ \ra  B[z_1,...,z_n]/(t^k)\ra A'_k\ra 0.
$$
From  $A'_k\stackrel{\varphi}{=}A_k$ we conclude 
\begin{equation}
\label{eq-eq-of-ideals}
J+(t^k)=(f_1,...,f_m)+(t^k)
\end{equation}
as an equality of ideals of $\reallywidehat{B[z_1,...,z_n]}$.
Hence there exist $g_i$ such that $f_i+t^k g_i\in J$. 
Set $J'=(f_1+t^k g_1,...,f_m+t^k g_m)$. We want to show that $J'=J$.
Define $R$ by the exact sequence
\begin{equation} \label{seq-def-R}
0\ra J'\ra J\ra R \ra 0
\end{equation}
of $\reallywidehat{B[z_1,...,z_n]}$-modules, 
and we want to show $R=0$. 
Tensoring \eqref{seq-def-R} with  $B_k$ yields
$$J'\otimes B_k\stackrel{\alpha}{\lra} J\otimes B_k\ra R\otimes B_k \ra 0.$$ 
Note that by \eqref{eq-eq-of-ideals}, the map $\alpha$ is surjective and thus $R\otimes B_{k}=0$. 
Now let $r\in R$ be any element. Since $r\otimes 1$ is zero in $R\otimes B_{k}$, we have
$r=t^k r'$ for some $r'\in R$. 
Similarly, $r'=t^kr''$ and so forth until we have a sequence $r=t^{sk}r^{(s)}$ for $s\ge 0$. This means 
$r\in\bigcap_{l\ge 0}t^lR$.
Note that $J'$ and $J$ are finitely generated $\reallywidehat{B[z_1,...,z_n]}$-modules, hence $R$ is finitely generated and hence $t$-adically complete (e.g. by \cite[Proposition 10.13]{AM}), i.e. $R=\liminv (R/t^l R)$.
However this implies $\bigcap_{l\ge 0}t^lR=0$, hence $r=0$.
\end{proof}

\begin{proposition}[Lifting relations] 
\label{prop-lift-relations}
Given $l>0$, let $a_1,...,a_m\in B[z_1,...,z_n]$ be such that
$$a_1f_1+...+a_mf_m \in (t^l).$$
Then there are $a_i'\in B[z_1,...,z_n]$ such that
$$(a_1+t^la_1')f_1+...+(a_m+t^la_m')f_m=0.$$
A similar statement holds if we replace $B[z_1,...,z_n]$ by $\reallywidehat{B[z_1,...,z_n]}$, and also if we then additionally replace $f_i$ by $f_i':=f_i+t^kg_i$ in view of Proposition~\ref{lemma-gi}.
\end{proposition}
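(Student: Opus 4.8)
The plan is to reduce the entire statement to a single $t$-saturation property of the defining ideal. Write $P=B[z_1,\ldots,z_n]$ and $J=(f_1,\ldots,f_m)\subseteq P$, so that $A=P/J$. The hypothesis $\sum_i a_if_i\in(t^l)$ says that the element $h:=\sum_i a_if_i$ lies in $t^lP$; by construction it also lies in $J$. The conclusion we want is precisely that $h$ can be written as $t^l\sum_i b_if_i$ for some $b_i\in P$, since then setting $a_i':=-b_i$ gives $\sum_i(a_i+t^la_i')f_i=0$. Thus the whole proposition is equivalent to the ideal-theoretic identity
$$
t^lP\cap J=t^lJ.
$$

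First I would prove this identity. The inclusion $t^lJ\subseteq t^lP\cap J$ is clear, so only $t^lP\cap J\subseteq t^lJ$ needs an argument. Take $x=t^lp\in J$ with $p\in P$. Its image in $A$ is zero, i.e.\ $t^l\bar p=0$ in $A$. By flatness of $A$ over $B$ the pull-back of $t$ is a non-zero-divisor on $A$ (this is exactly the observation recorded in Setup~\ref{the-setup}), hence so is $t^l$, and therefore $\bar p=0$, i.e.\ $p\in J$. Consequently $x=t^lp\in t^lJ$, which proves the identity. Applying it to the element $h$ yields $h\in t^lJ$ and completes the polynomial case.

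For the completed version one repeats the argument verbatim with $P$ replaced by $\reallywidehat{B[z_1,\ldots,z_n]}$; the only point to verify is that $t$ remains a non-zero-divisor on the relevant quotient. For the unperturbed equations, flatness of $\widehat A$ over $\widehat B$ follows by the same reasoning as in Lemma~\ref{lemma-flatness-completion}, and since $t$ is a non-zero-divisor on $\widehat B$ and a non-zero-divisor of the base stays one after a flat base change, $t$ is a non-zero-divisor on $\widehat A$. For the perturbed equations $f_i':=f_i+t^kg_i$ one invokes Proposition~\ref{lemma-gi}, which identifies $\widehat A'=\reallywidehat{B[z_1,\ldots,z_n]}/(f_1',\ldots,f_m')$, together with Lemma~\ref{lemma-flatness-completion}, which guarantees that $\widehat A'$ is flat over $\widehat B$; the same non-zero-divisor argument then gives $t^l\widehat P\cap J'=t^lJ'$ for $J'=(f_1',\ldots,f_m')$, and the proof goes through unchanged.

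The argument is essentially formal once the non-zero-divisor property is in hand, so I do not expect a serious obstacle. The one thing to be careful about is that in each of the three settings — the polynomial ring, its completion with the original $f_i$, and its completion with the perturbed $f_i'$ — one has the correct flatness statement available to conclude that $t$ is a non-zero-divisor on the quotient, which is precisely what Setup~\ref{the-setup}, Lemma~\ref{lemma-flatness-completion}, and Proposition~\ref{lemma-gi} respectively provide.
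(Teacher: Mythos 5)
Your proof is correct, and it takes a genuinely different and more economical route than the paper. The paper's proof chooses a resolution $\cdots\to R_{-1}\to R_0\to J\to 0$ of $J=(f_1,\ldots,f_m)$ by free $B[z_1,\ldots,z_n]$-modules, checks that all terms are flat over $B$, tensors with $B_l$, and lifts the syzygy $(a_1,\ldots,a_m)$ through $R_{-1}$. You instead observe that the statement is exactly the ideal identity $t^lP\cap J=t^lJ$, which follows in one line from $t^l$ being a non-zero-divisor on $A=P/J$ (itself recorded in Setup~\ref{the-setup} as a consequence of flatness). This is a clean simplification; in fact the paper's own argument tacitly uses the same identity at the step where it asserts $(a_1,\ldots,a_m)\in\ker\bigl(R_0\otimes_B B_l\to J\otimes_B B_l\bigr)$, since the hypothesis only puts $\sum a_if_i$ in $t^lP\cap J$ while vanishing in $J\otimes_B B_l=J/t^lJ$ requires membership in $t^lJ$ — so your lemma isolates the essential point and renders the resolution unnecessary. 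The trade-off is that the resolution argument would also lift higher syzygies and is the template one would follow for more general coefficient modules, but for the statement as given your argument is complete. Your handling of the two completed variants is also sound: you correctly identify that the only input needed in each case is that $t$ remain a non-zero-divisor on the quotient, and you source this from the flatness of $\widehat A$ (respectively $\widehat A'$) over $\widehat B$ via Lemma~\ref{lemma-flatness-completion} and Proposition~\ref{lemma-gi}, which is consistent with the order in which the paper establishes these results.
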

\begin{proof}
Consider the exact sequence of $B$-modules
$0\ra J\ra B[z_1,...,z_n]\ra A\ra 0$
since all terms except possibly $J$ are flat, we deduce that also $J$ is a flat $B$-module.
Let $...\ra R_{-2}\ra R_{-1}\ra R_0\ra J\ra 0$ be a resolution of $J$ by finitely generated free $B[z_1,...,z_n]$-modules. We assume $R_0$ is freely generated by $f_1,...,f_m$.
All terms in the sequence are flat $B$-modules, hence tensoring the sequence with $B_l$ yields another exact sequence. We have $(a_1,...,a_m)\in \ker(R_{0}\otimes_B B_l\ra J\otimes_B B_l)$, 
so it comes from an element $b\in R_{-1}\otimes_B B_l$. Let $\hat b$ be a lift of $b$ in $R_{-1}$. Its image in 
$R_{0}$ takes the form $(a_1+t^la_1',...,a_m+t^la_m')$ and it maps to zero in $J$, so is a relation as desired.
\end{proof}

\subsection{Proofs for the statements in \S\ref{section-main-results}}
\label{proof-section}
We continue with the setup of the previous paragraph. 
The first draft of the following main result was inspired by \cite[Proof of Prop. 4.4]{hartshorne}.

\begin{proposition}[Lifting isomorphisms] 
\label{prop-lift-iso}
Assume $l\ge k$ and that for some $h$ we have a commutative diagram of $\widehat B$-algebras
\[
\vcenter{\vbox{
\xymatrix@C=30pt
{ 
 \reallywidehat{B[z_1,\ldots,z_n]}\ar^{z_i\mapsto \hat z_i}[r]\ar_h[d] & A'/t^{l+1}I' \ar[r]& A'/t^{l}I'\ar_\beta[d] \ar[r]& A'/t^{k}I' \ar_{\varphi^{-1}}[d]\\
 \reallywidehat{B[z_1,\ldots,z_n]}\ar^{z_i\mapsto z_i}[r]
 & A/t^{l+1}I \ar[r]& A/t^{l}I \ar[r]& A/t^{k}I 
}
}}
\]
where the horizontal maps are the natural projections (use Lemma~\ref{lemma-Acirc}) and $\beta$ is an isomorphism. Let $\bar\beta:A'/t^{l-2N}I'\ra A/t^{l-2N}I$ denote the induced isomorphism.
Then there is a $\widehat B$-linear isomorphism $\hat\beta:A'/t^{l+1}I' \ra A/t^{l+1}I$ making the following diagram commutative
\[
\vcenter{\vbox{
\xymatrix@C=30pt
{ 
A'/t^{l+1}I' \ar_{\hat\beta}[d]\ar[r]& A'/t^{l-2N}I' \ar_{\bar\beta}[d]\\
A/t^{l+1}I \ar[r]& A/t^{l-2N}I.\\
}
}}
\]
\end{proposition}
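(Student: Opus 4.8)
The plan is to build $\hat\beta$ in two stages: first write down a candidate homomorphism on the ambient power series ring by means of $h$, and then correct it by a $\widehat B$-derivation that is divisible by $t^{l-2N}$, so that the correction is invisible modulo $t^{l-2N}$ while it kills the discrepancy obstructing descent.

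First I would set $R=\reallywidehat{B[z_1,\ldots,z_n]}$ and consider $\psi_0:=p\circ h\colon R\to A/t^{l+1}I$, where $p\colon R\to A/t^{l+1}I$ is the lower horizontal map $z_i\mapsto z_i$. By Lemma~\ref{lemma-Acirc} the upper horizontal map $p'\colon R\to A'/t^{l+1}I'$, $z_i\mapsto \hat z_i$, is surjective, and by Proposition~\ref{lemma-gi} its kernel is generated by $f_i':=f_i+t^kg_i$ together with the lift of $t^{l+1}I'$. To get $\hat\beta$ with $\hat\beta\circ p'=\psi_0$ I must kill $\ker p'$ under $\psi_0$; the part coming from $t^{l+1}I'$ is automatic, and the commutativity of the given diagram shows $\psi_0\equiv \beta\circ p'\pmod{t^lI}$, so the elements $\eta_i:=\psi_0(f_i')$ all land in $t^lI/t^{l+1}I$. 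Using Proposition~\ref{prop-lift-relations} to see that $(\eta_i)$ respects the syzygies of the $f_i'$, the tuple $(\eta_i)$ represents a class $\omega\in T^1(A'/B,\,t^{l-2N}I'/t^{l+1}I')$ which is precisely the obstruction to the descent once corrections of the $z_i$ by $t^{l-2N}I$ are allowed.

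Next I would look for $\hat\beta$ of the form $\hat\beta(p'(z_i))=\psi_0(z_i)+\epsilon_i$ with $\epsilon_i\in t^{l-2N}I/t^{l+1}I$; this range forces $\hat\beta\equiv\psi_0\equiv\beta\pmod{t^{l-2N}}$, which is the compatibility with $\bar\beta$ demanded by the diagram. Since $l\ge k>4N$ we have $2(l-2N)\ge l+1$, so the quadratic terms in the $\epsilon_i$ vanish in $A/t^{l+1}I$ and the equation $\hat\beta(p'(f_i'))=0$ collapses to the linear system $\sum_j (\partial f_i'/\partial z_j)\cdot\epsilon_j=-\eta_i$ in $t^{l-2N}I/t^{l+1}I$. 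Its solvability is exactly the vanishing of $\omega$. By Lemma~\ref{lemma-shift} the coefficient module is isomorphic to $I'/t^{2N+1}I'$, and Lemma~\ref{annnili1} (with $L=2N+1$, legitimate because $k>4N\ge 2N+1$ and $l\ge 4N+1$) gives $t^{2N}T^1(A'/B,\,t^{l-2N}I'/t^{l+1}I')=0$.

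The hard part will be upgrading ``$t^{2N}\omega=0$'' to ``$\omega=0$''. Here I would exploit that $\beta$ already lifts $\bar\beta$ to the intermediate level $l$: this makes $\omega$ die under the reduction $T^1(A'/B,t^{l-2N}I'/t^{l+1}I')\to T^1(A'/B,t^{l-2N}I'/t^lI')$, so by the change-of-modules sequence $\omega$ comes from $T^1(A'/B,t^lI'/t^{l+1}I')$ via the inclusion $\iota$ of the bottom graded piece. Since $\eta_i\in t^lI'$ sits $2N$ powers deep inside $t^{l-2N}I'$, this $\iota$ is exactly a factor of the multiplication-by-$t^{2N}$ endomorphism of $t^{l-2N}I'/t^{l+1}I'$; feeding this into the $t^{2N}$-annihilation, $\omega$ becomes $t^{2N}$-divisible and hence $0$, the one remaining input being the stability of $\ker_{t^N}T^2(A/B,I)$ built into the choice of $N$, which is what makes the relevant $T^2$-valued connecting term vanish. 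Once the $\epsilon_i$ are produced, $\hat\beta$ is defined and agrees with $\bar\beta$ modulo $t^{l-2N}$ by construction. Finally, to see $\hat\beta$ is an isomorphism I would apply the five lemma to the map of square-zero extensions $0\to t^lI'/t^{l+1}I'\to A'/t^{l+1}I'\to A'/t^lI'\to 0$ and its unprimed analogue: the reduction $\bar{\hat\beta}$ satisfies $\bar{\hat\beta}\circ\beta^{-1}\equiv\id$ modulo the square-zero ideal $t^{l-2N}I/t^lI$ (square-zero since $l\ge 4N$), hence is an isomorphism, while on kernels $\hat\beta$ induces, via Lemma~\ref{lemma-shift}, the reduction of $\beta$ modulo $t$ on $I'/tI'\to I/tI$, again an isomorphism.
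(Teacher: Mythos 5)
Your overall strategy coincides with the paper's: correct the lift $h$ of $\beta$ by a derivation with values in $t^{l-2N}I/t^{l+1}I$, so that the correction is invisible modulo $t^{l-2N}$ while killing the discrepancy $(\eta_i)=(\psi_0(f_i'))$, the solvability being governed by $T^1(A'/B,\,t^{l-2N}I'/t^{l+1}I')$, which Lemma~\ref{annnili1} with $L=2N+1$ shows is annihilated by $t^{2N}$. Your linearization, the use of $2(l-2N)\ge l+1$ to discard quadratic terms, and the closing five-lemma argument for bijectivity of $\hat\beta$ (arguably cleaner than the paper's separate surjectivity and injectivity checks) are all sound.

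The gap sits exactly where you flag ``the hard part'': upgrading $t^{2N}\omega=0$ to $\omega=0$. Your chase is: $\omega\in\im T^1(\iota)$ for $\iota:t^lI'/t^{l+1}I'\hra M:=t^{l-2N}I'/t^{l+1}I'$, and multiplication by $t^{2N}$ on $M$ factors as $\iota\circ\mu\circ\pi$ with $\pi:M\sra t^{l-2N}I'/t^{l-2N+1}I'$ the projection and $\mu$ the shift isomorphism; hence, you claim, $\omega$ is $t^{2N}$-divisible. But this is the wrong inclusion: $\im(t^{2N}\cdot)=\im\bigl(T^1(\iota)\circ T^1(\mu)\circ T^1(\pi)\bigr)\subseteq\im T^1(\iota)$, so to place $\omega$ in $\im(t^{2N}\cdot)$ you would need the class $\omega_0$ with $T^1(\iota)(\omega_0)=\omega$ to lie in $\im\bigl(T^1(\mu)\circ T^1(\pi)\bigr)$, i.e.\ (after the isomorphism $\mu$) in the image of $T^1(\pi)$, whose cokernel is obstructed by a connecting term in $T^2(A'/B,\,t^{l-2N+1}I'/t^{l+1}I')$ that has no reason to vanish; the stability of $\ker_{t^N}T^2(A/B,I)$ enters only into the proof of Lemma~\ref{annnili1} and does not control this term. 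What is actually needed, and what the paper proves, is divisibility of the \emph{cocycle} itself: writing $\eta_i=t^l h_i$ with $h_i\in I$, one must check that the tuple $(t^{l-2N}h_i)$ still respects the syzygies of the $f_i'$. This is not formal, since from $\sum a_i(t^l h_i)=0$ in $M$ one cannot cancel $t^{2N}$ ($t^{2N}$ is a zero-divisor on $M$). The paper lifts the relation $\sum a_if_i'=0$ exactly to $\reallywidehat{B[z_1,\ldots,z_n]}$ via Proposition~\ref{prop-lift-relations}, applies $h$, and cancels $t^{2N}$ in $A$, where $t$ is a non-zero-divisor, before reducing modulo $t^{l+1}I$. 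Replacing your cohomological chase by this cocycle-level computation closes the argument; your own invocation of Proposition~\ref{prop-lift-relations} already puts you one step away from it.
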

\begin{proof}
First note that only $\beta$ is the relevant datum since $h$ can always be found for any given $\beta$.
We find for all $l$ that 
$$A/t^{l}I= A/t^{l}\times_{A/((t^{l})+I)} A/I \qquad \hbox{ and } \qquad A'/t^{l}I'= A'/t^{l}\times_{A'/((t^{l})+I')} A'/I'.$$
Note that the isomorphism $\beta$ respects these fibre product decompositions, i.e. $\beta$ decomposes in isomorphisms $\beta_1:A'/t^{l}\ra A/t^{l}$, 
$\beta_2:A'/((t^{l})+I')\ra A/((t^{l})+I)$ and $\beta_3:A'/I'\ra A/I$.
In the pursuit of producing $\hat\beta$, requiring $\hat\beta_3=\beta_3$, 
it suffices to produce an isomorphism $\hat \beta_1:A'/t^{l+1}\ra A/t^{l+1}$ that restricts to the isomorphism
$\hat \beta_2:A'/((t^{l+1})+I')\ra A/((t^{l+1})+I)$ that is already induced from $\beta_3$.

Here is how we achieve this. From now on, we mostly work modulo $t^{l+1}$, first some notation: 
set 
$$\begin{array}{rcl}J'&:=&\ker\big(B[z_1,\ldots,z_n]/t^{l+1}\ra A'/t^{l+1}\big),\\
J&:=&\ker\big(B[z_1,\ldots,z_n]/t^{l+1}\ra A/t^{l+1}\big),
\end{array}
$$
so $J=(f_1,...,f_m)$. By Proposition~\ref{lemma-gi}, $J'=(f'_1,...,f_m')$ with $f_i'=f_i+t^kg_i$. 
By abuse of notation, we also denote by $I$ and $I'$ the pullback of the respective ideal to $B[z_1,\ldots,z_n]/t^{l+1}$ and also to $\reallywidehat{B[z_1,\ldots,z_n]}$.
Let $h_{l+1}:B[z_1,\ldots,z_n]/t^{l+1}\ra B[z_1,\ldots,z_n]/t^{l+1}$ be the restriction of $h$.
Since $\beta$ is an isomorphism, we have that $h_{l+1}$ as well as $h$ identifies $t^{l}I'+J'$ with $t^{l}I+J$ and also their squares. 
We thus have the following inclusions of $B[z_1,\ldots,z_n]/t^{l+1}$-modules,
\[
\vcenter{\vbox{
\xymatrix@C=30pt
{ 
 J'^2 \ar@{}[r]|-{\subset}
& (t^{l}I'+J')^2 \ar^\sim_h[d]\ar@{}[r]|-{\subset}   
 &   J' \ar@{}[r]|-{\subset}  
 &   t^{l}I'+J'  \ar^\sim_h[d]  \\
& (t^{l}I+J)^2 \ar@{}[r]|-{\subset}
  &    J \ar@{}[r]|-{\subset}
  &   t^{l}I+J \ar@{}[r]|-{\subset}
  &  t^{l-2N}I+J.
}
}}
\]
We observe that $h$ induces a $B[z_1,\ldots,z_n]/t^{l+1}$-module homomorphism 
$$\bar h\in \Hom\left(\frac{J'}{J'^2},\frac{t^{l-2N}I+J}{t^{l+1}I+J}\right)=:H.$$
\noindent\underline{Claim:} $\bar h=t^{2N} \bar h'$ for some $\bar h'\in H$.\\
Indeed, $h$ sends $f_i'$ to $h(f_i')=f_i+t^l h_i\equiv t^l h_i\ (\op{modulo} J)$ for some $h_i\in I$ (as an element of $\reallywidehat{B[z_1,\ldots,z_n]}$).
We define $\bar h'$ via $\bar h'(f_i')=t^{l-2N} h_i$ and need to show this is well-defined. 
Since $h_i\in I$, $\bar h'$ does indeed map into $t^{l-2N}I+J$ and we need to check that this preserves relations.
Let $0=\sum a_if'_i$ hold in $B[z_1,\ldots,z_n]/t^{l+1}$, 
so by Prop.~\ref{prop-lift-relations}, we may assume this holds already in $\reallywidehat{B[z_1,\ldots,z_n]}$.
Hence, $0=\sum h(a_i)h(f'_i)=\sum h(a_i)(f_i+t^l h_i)$ holds in $\reallywidehat{B[z_1,\ldots,z_n]}$ and thus
$0=t^{2N}\sum h(a_i)(t^{l-2N} h_i)$ holds in $A$ where $t^{2N}$ is a non-zerodivisor. We conclude that
$0=\sum h(a_i)(t^{l-2N} h_i)$ holds in $A$ and then also in $A/t^{l+1}$. Hence $\bar h'$ is well-defined and we verified the claim.

We derive from \S\ref{change-of-rings}, \S\ref{Tzero-section}, \S\ref{smooth-section} and \S\ref{close-embedding} the following exact sequence of $B[z_1,...,z_n]/t^{l+1}$-modules 
\begin{equation}
\label{eq-seq-T1}
\resizebox{.91\textwidth}{!}{$
\Hom\left(\Omega_{(B[z_1,\ldots,z_n]/t^{l+1})/B_{l+1}},\frac{t^{l-2N}I+J}{t^{l+1}I+J}\right)\ra H 
\ra T^1\left(A'_{l+1}/B_{l+1},\frac{t^{l-2N}I+J}{t^{l+1}I+J}\right)\ra 0.
$}
\end{equation}
Note that $\frac{t^{l-2N}I+J}{t^{l+1}I+J}$ is an $A_{2N+1}$-module that is identified with the $A'_{2N+1}$-module $\frac{t^{l-2N}I'+J'}{t^{l+1}I'+J'}$ under $\varphi$.
Setting $L=2N+1$ in Lemma~\ref{annnili1} combined with the claim above yields that $\bar h$ maps to zero in $T^1\left(A'_{l+1}/B_{l+1},\frac{t^{l-2N}I+J}{t^{l+1}I+J}\right)$ and hence lifts to a derivation
$$\theta\in\Hom_{B[z_1,...,z_n]/t^{l+1}}\big(\Omega_{(B[z_1,...,z_n]/t^{l+1})/B_{l+1}},(t^{l-2N}I+J)/(t^{l+1}I+J)\big).$$
Then let $\tilde\theta$ refer to the map  of $B$-modules
$B[z_1,...,z_n]/t^{l+1}\ra A_{l+1}$,  $q\mapsto \theta(dq).$
We then set $\hat\beta_1=h-\tilde\theta$. Indeed this gives a map $B[z_1,...,z_n]/t^{l+1}\ra A_{l+1}$ that sends $J'$ to zero since by construction the image of $J'$ under $h$ and under $\tilde\theta$ agree. 
We therefore get a map $\hat\beta_1 :A'_{l+1}\ra A_{l+1}$ of $B$-modules.
First observe that $\hat\beta_1$ is a map of $B$-algebras:
indeed, we find $(h(a)-\tilde\theta(a))(h(b)-\tilde\theta(b))=h(ab)-\tilde\theta(ab)$ follows from $\tilde\theta(a)\tilde\theta(b)=0$ since $2(l-2N)\ge l+1$.
Since $\beta$ restricts to an isomorphism $A'/I'\ra A/I$ and $h$ is a lift of it, the restriction of $h$ to $A'_{l+1}\ra A_{l+1}$ takes $I'$ to $I$. 
Since $\theta$ has image contained in $I$, we conclude that $\hat\beta_1$ induces the restriction $A'/((t^{l+1})+I')\ra A/((t^{l+1})+I)$ which coincides with the restriction of $\beta$. This readily allows us to glue $\hat\beta_1$ along $\hat\beta_2$ and $\hat\beta_3$ to a map $\hat\beta:A'/t^{l+1}I'\ra A/t^{l+1}I$.
Furthermore, because $\theta$ has image in $t^{l-2N}I$, $\hat\beta$ fits in the commutative diagram with $\bar\beta:A'/t^{l-2N}I'\ra A/t^{l-2N}I$ as required in the assertion. 

It remains to show that $\hat\beta$ is an isomorphism, or equivalently that $\hat\beta_1$ is one.
We know that modulo $t^k$, the map $\hat\beta_1$ agrees with the restriction of $\varphi$. 
By the definition of $\hat z_i$, we thus have $h(z_i)=z_i+t^kz_i'$ and thus $h(z_i)-\tilde\theta(z_i)=z_i+t^kz_i'+t^lz_i''$ for some $z_i',z_i''$. 
We can view $h(z_i)-\tilde\theta(z_i)$ as a lift of $z_i$ in the sense of Lemma~\ref{lemma-Acirc} which then implies that $\hat\beta_1$ is surjective. 
We still have to show that $\hat\beta_1$ is also injective. Let $\widehat\beta_1:B[z_1,...,z_n]/t^k\ra B[z_1,...,z_n]/t^k$ be a lift of $\hat\beta_1$ (which sends $J'$ to $J$).
Since $\beta$ is an isomorphism, an element in the kernel of $\hat\beta_1$ is can be represented in the form $t^lf$ for some $f\in B[z_1,...,z_n]/t^{l+1}$. 
By flatness (as argued for $I$ before), $(t^{l})\cap J=t^lJ$, so $\widehat\beta_1$ maps $t^lf$ into $t^lJ$. 
But on the other hand, since $\varphi$ is an isomorphism, modulo $t$, $\widehat\beta_1$ is identity and $J'/tJ'=J/tJ$. Thus
$f+tf'\in J'$ for some $f'$. But then $t^l(f+tf')=t^lf$ in $A'_{l+1}$, so $t^lf=0$ in $A'_{l+1}$.
\end{proof}

\begin{proof}[Proof of Theorem~\ref{formal-result}]
Using Proposition~\ref{prop-lift-iso}, an induction on $l\ge k$ with $l=k$ the base case gives an isomorphism $\varphi_l:A'_l\ra A_l$ for $l\ge k$.
These isomorphisms are not compatible with the projections $A'_{l+1}\ra A'_l$ and $A_{l+1}\ra A_l$, however this is easy to fix.
Define $\psi_l:=\varphi_{l+2N+1}/t^{l}: A'_l\ra  A_l$. Then for any $l\ge 0$ we have a commutative diagram
\[
\xymatrix@C=30pt
{ 
A'_{l+1}\ar_{\psi_{l+1}}[d]\ar[r] &  A_l\ar_{\psi_{l}}[d]\\
A'_{l+1}\ar[r] &  A_l
}
\]
where the vertical maps are isomorphisms and the horizontal maps the natural projections. Taking the inverse limit over $l$ gives the result.
\end{proof}

\begin{proof}[Proof of Theorem~\ref{etale-result}]
In view of \cite[Corollary (2.1)]{artin1}, Theorem~\ref{etale-result} is essentially a corollary of Theorem~\ref{formal-result} as follows.
Present $A=B[z_1,...,z_n]/(f_1,...,f_m)$, $A'=B[z_1,...,z_n]/(g_1,...,g_m)$ so that $x$ is the origin. 
Consider the system of $m$ equations 
\begin{equation}
\label{system-to-be-solved-by-graph}
f_i(a_1,...,a_n)= \sum_{j=1}^m b_{ij}g_j
\end{equation}
in the variables $a_i,b_{ij}$. 
The $\widehat B$-linear isomorphism $\hat\varphi$ of $\widehat A$ with $\widehat A'$ obtained by Theorem~\ref{formal-result} provides a formal solution of \eqref{system-to-be-solved-by-graph} by setting 
$a_j=\hat\varphi_j(z_1,...,z_n)$ and power series $b_{ij}$ exist since $\hat\varphi$ takes the ideal $(f_1,...,f_m)$ to $(g_1,...,g_m)$.
Hence, the \'etale approximation of the formal solution as of \cite[Corollary (2.1)]{artin1} provides the result to be proven.
\end{proof}

\begin{proof}[Proof of Theorem~\ref{analytic-result}] 
This goes precisely as the previous proof except that now we approximate the formal solution by an analytic solution, the corresponding approximation theorem by Artin is \cite[Theorem (1.2)]{artin2}.
\end{proof}

\begin{proof}[Proof of Lemma~\ref{rigid-nearby-fibres}] 
Let $\pi:X=\Spec A\ra Y=\Spec B$ denote the map and let $U=Y\setminus\{t=0\}$.
The sheaf $\shT^1_{X/Y}$ is coherent. To prove that it restricts to zero over $f^{-1}(U)$, by Nakayama's lemma, it suffices to show that 
$\shT^1_{X/Y}\otimes_{\shO_X}{\shO_{X,x}/\frak{m}_{X,x}}=0$
for all $x\in X$ with $\pi(x)\in U$.
It suffices to show that if $y\in U$ is a point with $\frak{m}_{Y,y}=(t_1,...,t_r)$ then 
$T^1(A/B,A)\otimes_A A/(t_1,...,t_r)=0$.
For $M$ an $A$-module, we get via \S\ref{change-of-modules} the exact sequence 
$$...\ra T^1(A/B,M/\Ann(t_i))\stackrel{\cdot t_i}\ra T^1(A/B,M)\ra T^1(A/B,M/t_iM)\ra ...$$
from which we learn that 
\begin{equation} \label{inclusion-of-moddy}
T^1(A/B,M)\otimes A/(t_i)\subseteq T^1(A/B,M\otimes A/(t_i)).
\end{equation}
By the assumption and by \S\ref{lemma-basechange}, we have $0=T^1(X_y/y)=T^1(A/B,A/(t_1,...,t_r))$. 
Now by successively using \eqref{inclusion-of-moddy} and Nakayama's Lemma again, we indeed find that $T^1(A/B,A)\otimes_A A/(t_1,...,t_r)=0$ as desired.
\end{proof}


\section{Approximations with non-rigid nearby fibres}
\label{section-divisor}
We next prove variants of the theorems in \S\ref{section-main-results} where $\shT^1_{X/Y}$ is supported in $\{tw=0\}$ where $t\in\Gamma(Y,\shO_Y)$ and $w\in\Gamma(X,\shO_X)$, so nearby fibres need not be rigid. 
To remedy this, in the notation of \S\ref{section-main-results}, we require $I=(w^r)$ for a suitable $r$. 

\begin{setup} \label{the-setup-2}
Let $X\ra Y$ be a flat finite type map of Noetherian affine schemes. 
Let $t\in \Gamma(Y,\shO_Y)=:B$ and $w\in \Gamma(X,\shO_X)=:A$ be non-zero-divisors. 
Let $t$ be a non-zerodivisor in $A/(w^r)$ for all $r$. Set $Z_r:=\Spec A/(w^r)$.
Assume that $\shT^1_{X/Y}$ is supported in $\{tw=0\}$. Let $\widehat X\ra \widehat Y$ be the completion in $t$.
\end{setup}

\begin{theorem}[\bf  formal with a divisor] 
\label{formal-result-divisor}
There exist $N>0$ and $M>0$ so that if $X'\ra Y$ is another flat map of Noetherian affine schemes, $w'\in \Gamma(X',\shO_{X'})$, $Z'_r\subset X'$ given by $(w')^r=0$ and
$\varphi:X'_k\cup Z_r'\ra X_k\cup Z_r$ an isomorphism over $Y$ for some $k>4N$ and $r>4M$ satisfying $\varphi^*(w)=w'$ and hence restricts to an isomorphism $Z_r'\stackrel{\sim}\ra Z_r$ then there is an isomorphism $\widehat\varphi:\widehat X'\ra \widehat X$ that commutes with the maps to $\widehat Y$ and so that the restrictions of $\widehat\varphi$ and $\varphi$ to 
$X'_{k-2N}\cup \hat Z'_{r-2M}$ agree where $\hat Z'_{r-2M}$ is the completion of $Z'_{r-2M}$ in $t$.
\end{theorem}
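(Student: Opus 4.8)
The plan is to rerun the proof of Theorem~\ref{formal-result} with the closed subscheme $Z$ replaced by $Z_r=\Spec A/(w^r)$, i.e.\ with $I=(w^r)$, while carrying a second, $w$-adic filtration alongside the $t$-adic one. The one structural input that changes is the following: since $w$ is a non-zerodivisor on $A$, multiplication by $w^r$ is an isomorphism $A\stackrel{\sim}{\ra}(w^r)=I$ of $A$-modules, so $T^i({A/B},I)\cong T^i({A/B},A)$ for $i=1,2$. In particular, at the level of modules, $T^1({A/B},I)\cong\shT^1_{X/Y}$, which by the hypothesis of Setup~\ref{the-setup-2} is now annihilated not by a power of $t$ but by $(tw)^C$ for some $C>0$. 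This is the only place where the weakened support hypothesis enters, and it is what forces the appearance of the second constant $M$.

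First I would establish the annihilation statement replacing Lemma~\ref{annnili1}. Applying \S\ref{change-of-modules} to $0\ra I\stackrel{\cdot t^L}{\lra} I\ra I/t^LI\ra 0$ as in \eqref{eq-les-talpha} (now with $I=(w^r)$), and using that $T^2({A/B},I)\cong T^2({A/B},A)$ is finitely generated by \S\ref{finite-generation} (so the $t$-torsion kernels $\ker_{t^L}T^2({A/B},I)$ stabilize and are killed by some $t^{N_0}$) together with $t^Cw^C\,T^1({A/B},I)=0$, one finds that $T^1({A/B},I/t^LI)$ is an extension of a module killed by $t^{N_0}$ by one killed by $t^Cw^C$, hence killed by $t^{N_0+C}w^C$, thus by $t^{2N}w^{2M}$ once $N,M$ are chosen with $2N\ge N_0+C$ and $2M\ge C$ (so depending only on $f$). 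The same holds after the shift of Lemma~\ref{lemma-shift} and, by base change \S\ref{lemma-basechange} and the identification supplied by $\varphi$ modulo $t^k(w^r)$, also for the $A'$-versions; this identification is legitimate because the relevant subquotients are killed by $t^{2N+1}$ and $2N+1\le k$.

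The lifting of equations and of relations (Propositions~\ref{lemma-gi} and \ref{prop-lift-relations}) carry over verbatim, as they use only the reduction of $\varphi$ modulo $t^k$; note $X_k\cup Z_r=\Spec A/t^k(w^r)$ because $t$ is a non-zerodivisor on $A/(w^r)$, whence $(t^k)\cap(w^r)=t^k(w^r)$. The heart of the argument is the analogue of Proposition~\ref{prop-lift-iso}. The obstruction to extending $\beta$ from level $l$ to level $l+1$ is again a class $\bar h$ with $\bar h(f_i')\equiv t^lh_i\pmod J$, but now $h_i\in I=(w^r)$, say $h_i=w^{2M}h_i'$ with $h_i'\in(w^{r-2M})$. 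I would take as target the \emph{coarser} module $(t^{l-2N}(w^{r-2M})+J)/(t^{l+1}(w^{r-2M})+J)$; inside it one may write $\bar h=t^{2N}w^{2M}\bar h''$ with $\bar h''(f_i')=[t^{l-2N}h_i']$ (this uses $r>4M$, so that $h_i'$ makes sense in $(w^{r-2M})$), and by the annihilation statement above the image of $\bar h$ in the corresponding $T^1$ vanishes. Lifting to a derivation $\theta$ then yields a map whose image lies in $t^{l-2N}(w^{r-2M})$, so that $\hat\beta:=h-\tilde\theta$ agrees with the previous isomorphism modulo $t^{l-2N}(w^{r-2M})$, i.e.\ both modulo $t^{l-2N}$ and on $Z_{r-2M}$. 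An induction on $l\ge k$ and passage to the inverse limit, exactly as in the proof of Theorem~\ref{formal-result}, then produce $\widehat\varphi$ with the asserted compatibilities.

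I expect the main obstacle to be precisely this two-parameter bookkeeping. In Theorem~\ref{formal-result} a single factor $t^{2N}$ could be split off the obstruction and cancelled against a $t^{2N}$-annihilated $T^1$; here, because the nearby fibres are not rigid, $T^1$ is annihilated only by $t^{2N}w^{2M}$ with a genuine $w$-factor, so one must split off a matching $w^{2M}$ as well. This is possible only after coarsening the target module from level $(w^r)$ to level $(w^{r-2M})$ (in the finer module $[t^{l-2N}h_i']$ would not be a legitimate element), and it is this coarsening --- not available in the $t$-direction, where no loss beyond $2N$ occurs --- that produces the loss of $2M$ and hence the $r-2M$ appearing in the statement. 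Verifying that the obstruction is genuinely divisible by $w^{2M}$ in the coarsened module while the corresponding $T^1$ remains annihilated by $t^{2N}w^{2M}$ is the delicate point of the whole argument.
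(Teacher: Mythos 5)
Your proposal is correct and follows essentially the same route as the paper: establish $t^{2N}w^{2M}$-annihilation of $T^1$ with coefficients in the relevant subquotients (you via $0\ra I\stackrel{\cdot t^L}{\ra}I\ra I/t^LI\ra 0$ and the stable $t$-torsion of $T^2$, the paper via $0\ra A\stackrel{\cdot t^Lw^M}{\ra}(w^M)\ra A/(t^L)\ra 0$ --- a cosmetic difference), then coarsen the target of the obstruction class to the $(w^{r-2M})$-level so that $\bar h=t^{2N}w^{2M}\bar h''$ can be split off and killed in $T^1$, and conclude by the same induction and inverse limit. Your identification of the coarsening step as the source of the $r-2M$ loss, and of the divisibility of the obstruction by $w^{2M}$ as the delicate point, matches the paper's argument exactly.
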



\begin{proof}
For the largest part, the proof coincides with the proof of Theorem~\ref{formal-result} in \S\ref{proof-section}. 
We only address the adaptions. 
By the support assumption and since $A\cong (w^s)$, there are $N,M\ge 0$ such that 
$$t^Nw^M T^1(A/B,(w^s))=0$$ 
for all $s\ge 0$ and furthermore such that
$\ker_{t^Nw^M} T^2_{A/B}$ is stable in the sense of \eqref{increasing-sequence-to-define-stable}. 
Similar to \S\ref{subsec-prep-proof}, from the short exact sequence
$0\ra A\stackrel{\cdot t^{L}w^{M}}{\lra} (w^{M})\ra A/(t^{L}) \ra 0,$
for $L\ge N$, we then arrive at
$$ t^{2N}w^{2M} T^1(A/B,(w^s)/(t^{L}w^s))=0 $$
for all $s\ge 0$. 
Similarly, as in Lemma~\ref{lemma-shift}, $(w^s)/(t^{L}w^s)$ is isomorphic to  $(t^{l-L}w^s)/(t^{l}w^s)$ as an $A$-module, so
$$ t^{2N}w^{2M} T^1(A/B,(t^{l-L}w^s)/(t^{l}w^s))=0.$$
Since $(t^{l-L}w^s)/(t^{l}w^s)$ has scheme theoretic support in $A/(t^L)$, for $k\ge L$ and using $\varphi^*(w)=w'$, we obtain a form of Lemma~\ref{annnili1} as
\begin{equation} \label{w-version-of-annihil}
 t^{2N}(w')^{2M} T^1(A'/B,(t^{l-L}w^r)/(t^{l}w^r))=0 
\end{equation}
whenever $k\ge L\ge N$ and $l\ge 2L$ and $k,r$ are the integers so that 
$$(\varphi^{-1})^*:A'/(t^k(w')^r)\ra A/(t^kw^r)$$  
is the isomorphism given by assumption. 
The statement of Proposition~\ref{prop-lift-iso} is adapted by setting $I=(w^r), I'=((w')^r)$ and saying that $\bar\beta$ is a map
$A'/(t^{l-2N}(w')^{r-2M})\ra A/(t^{l-2N}w^{r-2M})$.
The proof of the resulting statement is very similar to that of Proposition~\ref{prop-lift-iso} until we arrive at the $B[z_1,...,z_n]/t^{l+1}$-module homomorphism
$$\bar h\in H:= \Hom\left(\frac{J'}{J'^2},\frac{(t^{l-2N}(w')^{r-2M})+J}{(t^{l+1}(w')^{r-2M})+J}\right).$$
This can then be written as $\bar h=t^{2N}(w')^{2M} \bar h'$ which enables the remainder of the proof of Proposition~\ref{prop-lift-iso}
using \eqref{w-version-of-annihil} and then also the proof of Theorem~\ref{formal-result-divisor} just like before.
\end{proof}
\begin{remark} 
The resulting $\hat\varphi$ in Theorem~\ref{formal-result-divisor} does not necessarily map $w$ to $w'$. However, it does map $(w)$ to $(w')$, so we can only conclude that it maps $w$ to $\eps w'$ for $\eps$ a unit satisfying $1=\eps|_{X'_{k-2N}\cup \hat Z'_{r-2M-1}}$.
\end{remark}

For the next theorem, assume we work over a field or excellent discrete valuation ring.
\begin{theorem}[\bf \'etale with a divisor] 
\label{etale-result-divisor}
Let $x\in X$ be a point with $t(x)=w(x)=0$. There exist $N>0,M>0$ with the following property.
If $X'\ra Y$ be another flat map of Noetherian affine schemes, $w'\in \Gamma(X',\shO_{X'})$, $Z'_r\subset X'$ given by $(w')^r=0$ and
$\varphi:X'_k\cup Z_r'\ra X_k\cup Z_r$ an isomorphism over $Y$ for some $k>4N$ and $r>4M$ that restricts to an isomorphism $Z_r'\stackrel{\sim}\ra Z_r$
and has $\varphi^*(w|_{X'_k\cup Z_r'})=w'|_{X_k\cup Z_r}$ then there are \'etale neighbourhoods $U,U'$ of $x$ in $X,X'$ respectively and an isomorphism $\varphi_x:U'\ra U$ that commutes with the maps to $Y$ and so that the restrictions of $\varphi_x$ and $\varphi$ to 
$X'_{k-2N}\cup \hat Z'_{r-2M}$ agree.
\end{theorem}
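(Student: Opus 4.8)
The plan is to deduce Theorem~\ref{etale-result-divisor} from its formal counterpart Theorem~\ref{formal-result-divisor} by Artin's algebraic approximation theorem, exactly as Theorem~\ref{etale-result} was deduced from Theorem~\ref{formal-result}. Fixing $N,M$ as supplied by Theorem~\ref{formal-result-divisor}, I would present $A=B[z_1,\ldots,z_n]/(f_1,\ldots,f_m)$ and $A'=B[z_1,\ldots,z_n]/(g_1,\ldots,g_m)$ so that $x$ is the origin, and let $w,w'\in B[z_1,\ldots,z_n]$ be polynomial lifts of the two divisor functions. Theorem~\ref{formal-result-divisor} then produces a $\widehat B$-linear isomorphism $\widehat\varphi$ of $\widehat A$ with $\widehat A'$ that agrees with $\varphi$ over $X'_{k-2N}\cup\hat Z'_{r-2M}$; writing $\widehat\varphi(z_j)=\hat a_j\in\widehat A'$, the $\hat a_j$ solve the system
$$
f_i(a_1,\ldots,a_n)=\sum_{j=1}^m b_{ij}\,g_j\qquad(i=1,\ldots,m)
$$
in the unknowns $a_j,b_{ij}$, because $\widehat\varphi$ carries $(f_1,\ldots,f_m)$ into $(g_1,\ldots,g_m)$.

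The only genuinely new point compared with the proof of Theorem~\ref{etale-result} is that the \'etale-local solution must stay compatible with $\varphi$ not merely modulo $t^{k-2N}$ but also modulo $(w')^{r-2M}$, that is, over all of $X'_{k-2N}\cup Z'_{r-2M}$. The observation that makes this manageable is that, since $t$ is a non-zerodivisor on the quotients by powers of $w'$ (Setup~\ref{the-setup-2} transported through $\varphi$), the ideal of this union is the \emph{principal} ideal $(u)$ with $u:=t^{k-2N}(w')^{r-2M}$; concretely $(t^{k-2N})\cap\big((w')^{r-2M}\big)=(u)$. I would therefore build the required congruence into the shape of the solution: choosing polynomial lifts $\alpha_j$ of $\varphi^*(z_j)$ and substituting $a_j=\alpha_j+u\,c_j$ turns the displayed system into a polynomial system in the new unknowns $c_j,b_{ij}$, for which $\widehat\varphi$ still furnishes a formal solution (one has $\hat a_j-\alpha_j\in(u)\widehat A'$ precisely because $\widehat\varphi$ agrees with $\varphi$ over $X'_{k-2N}\cup\hat Z'_{r-2M}$, so $c_j:=(\hat a_j-\alpha_j)/u$ is well defined). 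Applying \cite[Corollary~(2.1)]{artin1} now yields an \'etale-local solution $c_j,b_{ij}$, and $a_j=\alpha_j+u\,c_j$ defines a $Y$-morphism $\varphi_x:U'\ra U$ of suitable \'etale neighbourhoods of $x$ which, by the very form $a_j\equiv\alpha_j\pmod{u}$, agrees with $\varphi$ over $X'_{k-2N}\cup Z'_{r-2M}$.

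It remains to check that $\varphi_x$ is an isomorphism after shrinking $U,U'$. Since $u\in(t)$ and $k-2N\ge 1$, the reduction of $\varphi_x$ modulo $t$ coincides with that of $\varphi$, which is an isomorphism of central fibres near $x$; as both families are flat over $Y$ with $t$ a non-zerodivisor, this suffices to conclude that $\varphi_x$ is an isomorphism in a neighbourhood of $x$ (alternatively, one encodes the inverse in the Artin system from the outset). I expect the main obstacle to be exactly this simultaneous two-fold congruence, in $t$ and in $w$: reconciling it with the single-ideal congruence format of Artin's theorem is what forces the passage to the principal generator $u$, and it is here that the non-zerodivisor hypotheses of Setup~\ref{the-setup-2} are indispensable.
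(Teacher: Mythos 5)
Your argument is correct and follows the same overall strategy as the paper: both deduce the statement from Theorem~\ref{formal-result-divisor} by presenting $A$ and $A'$ as quotients of $B[z_1,\ldots,z_n]$ with $x$ the origin, feeding the formal isomorphism into the system $f_i(a_1,\ldots,a_n)=\sum_j b_{ij}g_j$ as a formal solution, and invoking Artin approximation, exactly as Theorem~\ref{etale-result} was deduced from Theorem~\ref{formal-result}. The one place where you genuinely diverge is the mechanism for retaining the congruence along the divisor. The paper keeps the system unchanged and performs the approximation with respect to the ideal $(tw)$ rather than $(t)$, so that a solution congruent to the formal one modulo a high power of $(tw)$ automatically agrees with $\varphi$ on $X'_{k-2N}\cup Z'_{r-2M}$. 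You keep the $(t)$-adic approximation and instead change variables, $a_j=\alpha_j+u\,c_j$ with $u=t^{k-2N}(w')^{r-2M}$, so that the required congruence is built into every solution of the modified system. This rests on the identity $(t^{k-2N})\cap((w')^{r-2M})=(u)$, i.e.\ on $t$ being a non-zerodivisor modulo powers of $w'$; strictly speaking this property is assumed for $A$ in Setup~\ref{the-setup-2} and should be transported to $\widehat A'$ via the formal isomorphism (which carries $(w)$ to $(w')$ by the remark after Theorem~\ref{formal-result-divisor}) rather than ``through $\varphi$'', but the equality is only needed to divide $\hat a_j-\alpha_j$ by $u$ when producing the formal solution $\hat c_j$ --- for the final agreement statement the trivial inclusion $(u)\subseteq(t^{k-2N})\cap((w')^{r-2M})$ suffices. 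Both devices work; yours has the small advantage that the formal solution supplied by Theorem~\ref{formal-result-divisor} already lives in the $t$-adic completion, so nothing has to be reinterpreted in the $(tw)$-adic completion, at the cost of the substitution bookkeeping. Your closing verification that $\varphi_x$ is an isomorphism near $x$ (isomorphism modulo $t$ plus flatness) is at the same level of detail as the paper's own treatment of Theorem~\ref{etale-result}, which leaves that point implicit.
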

\begin{proof} 
Given Theorem~\ref{formal-result-divisor}, the proof proceeds precisely as the proof of Theorem~\ref{etale-result} in \S\ref{proof-section} except that we approximate with respect to the ideal $(tw)$ rather than $(t)$.
\end{proof}

Let now $X\ra Y$ be a flat map of complex analytic spaces and $t\in \Gamma(Y,\shO_Y)$ and $w\in \Gamma(X,\shO_X)$ be non-zero-divisors. 
Let $Z,Z_k,X_k\subset X$ be the closed subspaces given by $w=0,w^k=0,t^k=0$ respectively.
Assume that $\shT^1_{X/Y}$ is supported in $tw=0$. 
\begin{theorem}[\bf analytic with a divisor] 
\label{analytic-result-divisor}
Let $x\in X$ be a point with $t(x)=w(x)=0$. There exist $N>0,M>0$ with the following property.
Assume $X'\ra Y$ is another flat map of complex analytic spaces with
$w'\in \Gamma(X',\shO_{X'})$, $Z'_r\subset X'$ given by $(w')^r=0$ and
$\varphi:X'_k\cup Z_r'\ra X_k\cup Z_r$ an isomorphism over $Y$ for some $k>4N$ and $r>4M$ that restricts to an isomorphism $Z_r'\stackrel{\sim}\ra Z_r$
and has $\varphi^*(w|_{X'_k\cup Z_r'})=w'|_{X_k\cup Z_r}$ then there are \'etale neighbourhoods $U,U'$ of $x$ in $X,X'$ respectively and an isomorphism $\varphi_x:U'\ra U$ that commutes with the maps to $Y$ and so that the restrictions of $\varphi_x$ and $\varphi$ to 
$(X'_{k-2N}\cup Z'_{r-2M})\cap U'$ agree.
\end{theorem}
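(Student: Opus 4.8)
The plan is to deduce this statement from Theorem~\ref{formal-result-divisor} in exactly the way that Theorem~\ref{analytic-result} was deduced from Theorem~\ref{formal-result}: encode the sought isomorphism as a solution of a finite system of analytic equations, exhibit a formal solution coming from the formal-with-a-divisor theorem, and then invoke Artin's \emph{analytic} approximation theorem. The algebraic prototype is Theorem~\ref{etale-result-divisor}, whose proof runs parallel to that of Theorem~\ref{etale-result} but approximates with respect to the ideal $(tw)$ rather than $(t)$, so that the resulting agreement lands in $X'_{k-2N}\cup \hat Z'_{r-2M}$. The same skeleton applies here, with the étale approximation replaced by the analytic one. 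Concretely, I would first choose near $x$ presentations $A=B[z_1,\dots,z_n]/(f_1,\dots,f_m)$ and $A'=B[z_1,\dots,z_n]/(g_1,\dots,g_m)$ with $x$ the origin and with $w,w'$ among the data.

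Next I would set up the system cutting out the graph of an isomorphism $\varphi_x:U'\to U$ over $Y$. As in the proof of Theorem~\ref{etale-result} this consists of the relations
$$f_i(a_1,\dots,a_n)=\sum_{j=1}^m b_{ij}\,g_j,$$
which express that $z_i\mapsto a_i$ descends to a ring map $A\to A'$; to respect the distinguished divisor I would adjoin the single equation $w(a_1,\dots,a_n)=\eps\,w'$ in an extra unknown $\eps$, reflecting the fact recorded in the remark after Theorem~\ref{formal-result-divisor} that the formal isomorphism carries $(w)$ to $(w')$ only up to a unit $\eps$. The formal isomorphism $\widehat\varphi:\widehat A'\to\widehat A$ supplied by Theorem~\ref{formal-result-divisor} then furnishes a formal solution over the completion in $t$: one sets $a_j=\widehat\varphi_j(z_1,\dots,z_n)$, the $b_{ij}$ exist because $\widehat\varphi$ takes $(f_1,\dots,f_m)$ into $(g_1,\dots,g_m)$, and $\eps$ is the unit determined by $\widehat\varphi^*(w)=\eps w'$.

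I would then apply Artin's analytic approximation theorem \cite[Theorem (1.2)]{artin2} to replace this formal solution by a convergent one agreeing with it to the required order; this is the \emph{only} place where the proof departs from the étale version, in which \cite[Corollary (2.1)]{artin1} was used instead. Because the approximation is taken with respect to the ideal $(tw)$ and the analytic solution agrees with $\widehat\varphi$ modulo a high power of $(tw)$, the induced analytic map $\varphi_x:U'\to U$ on suitable open neighbourhoods of $x$ is an isomorphism over $Y$ whose restriction to $(X'_{k-2N}\cup Z'_{r-2M})\cap U'$ coincides with $\varphi$.

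The main obstacle I expect is carrying the divisor constraint cleanly through the approximation. One must check that the approximating solution still sends $(w)$ into $(w')$, so that $\varphi_x$ genuinely preserves the principal divisor, and that the unknown $\eps$ remains a unit in a neighbourhood of $x$ after approximation, all while simultaneously matching $\varphi$ to the prescribed order on both $X'_{k-2N}$ and $Z'_{r-2M}$. Since agreement modulo a high power of $(tw)$ forces $\eps$ to agree with the formal unit at $x$, and invertibility is an open condition, this is exactly where one cashes in the explicit orders $k-2N$ and $r-2M$ produced by Theorem~\ref{formal-result-divisor}; the remaining verifications are then formally identical to the non-divisor analytic case of Theorem~\ref{analytic-result}.
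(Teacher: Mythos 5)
Your proposal follows essentially the same route as the paper: the paper's proof of this theorem simply says it proceeds as the proof of Theorem~\ref{analytic-result} (i.e.\ encode the isomorphism as a solution of the system $f_i(a_1,\dots,a_n)=\sum_j b_{ij}g_j$, feed in the formal solution from Theorem~\ref{formal-result-divisor}, and apply Artin's analytic approximation \cite[Theorem (1.2)]{artin2}), except that one approximates with respect to the ideal $(tw)$ rather than $(t)$. Your additional equation $w(a_1,\dots,a_n)=\eps\,w'$ tracking the unit $\eps$ from the remark after Theorem~\ref{formal-result-divisor} is a sensible elaboration of a detail the paper leaves implicit, but it does not change the argument in substance.
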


\begin{proof} 
Given Theorem~\ref{formal-result-divisor}, the proof proceeds precisely as the proof of Theorem~\ref{analytic-result} in \S\ref{proof-section} except that we approximate with respect to the ideal $(tw)$ rather than $(t)$.
\end{proof}


\section{Finite determinacy of log morphisms}
\label{section-log-application}
We explain in this section how to deduce from the local uniqueness results in the previous sections a useful fact for fibres of morphisms of log spaces. 

All log structures in this section are considered in the \'etale topology. We will often speak of schemes but all definitions and results equally apply to analytic or algebraic spaces.
For an effective reduced Weil divisor $D\subset X$ in a scheme $X$ let $j:X\setminus D\ra X$ denote the open embedding of its complement.
The \emph{divisorial log structure} $\shM_{(X,D)}$ is defined to be the monoid sheaf
$$\shM_{(X,D)}:= j_*\shO^\times_{X\setminus D}\cap \shO_X$$
together with its natural embeddeding in $\shO_X$. Note that $\shM_{(X,D)}$ contains $\shO^\times_X$. 
A monoid is called \emph{toric} if it is finitely generated, torsion-free, integral and saturated. Recall that a toric monoid has a unique set of minimal generators.
\begin{lemma} \label{constructible}
If $X$ is normal then the sheaf $\overline\shM_{(X,D)}:=\shM_{(X,D)}/\shO_X^\times$ is constructible, i.e. every point has an \'etale neighbourhood that is stratified by locally closed sets on which the sheaf is constant. Furthermore the stalks are toric monoids.
\end{lemma}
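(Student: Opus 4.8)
The plan is to prove the two assertions of Lemma~\ref{constructible}---constructibility of $\overline\shM_{(X,D)}$ and the toricity of its stalks---by a local analysis of the divisorial log structure. Since both statements are étale-local on $X$ and $X$ is normal, I would first reduce to understanding the stalks $\overline\shM_{(X,D),\bar x}$ at geometric points $\bar x$, and then argue that these stalks are locally constant along the natural stratification by the divisorial components passing through $\bar x$.

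First I would identify the stalk. At a geometric point $\bar x$, write the completed (or strictly henselian) local ring of $X$, and let $D_1,\ldots,D_s$ be the irreducible components of $D$ through $\bar x$. By definition $\shM_{(X,D)}$ consists of those functions that become invertible away from $D$ and are regular on $X$; since $X$ is normal, the local ring $\shO_{X,\bar x}$ is a normal domain, so I can use the valuations $\nu_i=\op{ord}_{D_i}$ coming from the height-one primes of the $D_i$. The key step is to show that the map $f\mapsto(\nu_1(f),\ldots,\nu_s(f))$ induces an isomorphism $\overline\shM_{(X,D),\bar x}\xrightarrow{\ \sim\ }\NN^s$. Surjectivity needs a local defining equation for each $D_i$ (available by normality, as each $D_i$ is locally principal at a generic point, and after passing to a suitable étale neighbourhood one can separate the branches), and injectivity amounts to saying that a regular function with $\nu_i(f)\ge 0$ for all $i$ and $\nu_i(f)=0$ for all $i$ is a unit, which is exactly the statement that a function regular and nonvanishing in codimension one on a normal local ring is a unit (the algebraic Hartogs / $R_1+S_2$ property). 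This gives $\overline\shM_{(X,D),\bar x}\cong\NN^s$, which is manifestly toric with minimal generators the images of the $e_i$, proving the stalk assertion.

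Next I would establish constructibility. The natural candidate stratification is by the locally closed sets $X_T:=\bigl(\bigcap_{i\in T}D_i\bigr)\setminus\bigcup_{j\notin T}D_j$ indexed by subsets $T$ of the components of $D$; on a small enough étale neighbourhood of any point only finitely many components meet, so this is a finite stratification into locally closed subsets. On each stratum the number $s$ of branches through a point is constant, and by the stalk computation the sheaf $\overline\shM_{(X,D)}$ has stalk $\NN^s$ everywhere on that stratum; what remains is to check that the specialization maps between nearby stalks are compatible so that the sheaf is genuinely \emph{constant} (not merely pointwise the same monoid) on each locally closed piece. This follows because on a fixed stratum the same local equations $\nu_i$ furnish a global trivialization of the quotient sheaf.

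The main obstacle I expect is separating the analytic/étale branches of $D$ and producing local equations in a uniform way: globally a single $D_i$ may be non-principal, and a toric monoid of rank $s$ only appears after passing to an étale neighbourhood in which the components through $\bar x$ are distinct and each locally cut out by one equation. Making this uniform enough to conclude the sheaf is constant on strata---rather than just having isomorphic stalks---is the delicate point, and it is exactly where normality (ensuring the valuations $\op{ord}_{D_i}$ are well-defined and that the algebraic Hartogs property holds) does the essential work.
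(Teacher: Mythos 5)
Your proposal contains a genuine gap at its central step: the claim that the vanishing-order map gives an isomorphism $\overline\shM_{(X,D),\bar x}\xrightarrow{\sim}\NN^s$. The injectivity half is correct and is exactly the paper's argument (a function with all vanishing orders zero is invertible off a codimension-two set, hence a unit by normality/$S_2$). But surjectivity fails for general normal $X$: each component $D_i$ is principal at its own generic point, yet need not be principal (Cartier) at the point $\bar x$, and passing to an \'etale or strictly henselian neighbourhood does not repair this, since the obstruction lives in the local class group of $\shO_{X,\bar x}$. The image of $\overline\shM_{(X,D),\bar x}$ in $\NN^s$ is only $G\cap\NN^s$, where $G\subset\ZZ^s$ is the group of divisors supported on $D_1\cup\ldots\cup D_s$ that are Cartier at $\bar x$; this is still a toric monoid (which is how the paper deduces the stalk assertion), but in general a proper submonoid of $\NN^s$. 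The paper's own Remark immediately after the lemma supplies the counterexample: for $X=\Spec\ZZ[x,y,z,w]/(xy-zw)$ and $D=V(z)$, two branches of $D$ pass through the origin, yet the stalk there is $\NN$, generated by $z$, because neither branch is individually Cartier at the ordinary double point.

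This error propagates into your constructibility argument. In the same example, the stratum $D_1\cap D_2$ of your proposed stratification $X_T$ contains the origin, where the stalk is $\NN$, and other points, where $X$ is smooth and the stalk is $\NN^2$; so $\overline\shM_{(X,D)}$ is \emph{not} constant on the strata indexed by which branches pass through a point, and the "same local equations furnish a global trivialization" step has nothing to trivialize with. The paper must therefore refine this stratification: on each stratum $S_I$ it passes to the generic point $\eta$ of a component $E$, chooses finitely many sections generating $\overline\shM_{\bar\eta}$, restricts to the open set where they are all defined (constancy there follows from the injection into the vanishing-order monoid $\NN^{|I|}$, since having the right vanishing orders at one point forces them at the generic point and conversely), and then handles the complement by induction on dimension. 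Your approach would be correct if $X$ were locally factorial (e.g.\ regular), but not under the stated hypothesis of normality.
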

\begin{proof} 
We only address the situation where $X$ is an algebraic space, because the same reasoning yields the proof in the situation where $X$ is a complex analytic space.
Set $\overline\shM:=\overline\shM_{(X,D)}$.
Given a geometric point $x\in X$, let $V$ be an (\'etale) neighbourhood of $x$ where every component of $D$ containing $x$ is geometrically unibranch. 
Let $D_1,...,D_r$ be these components. Consider the map $\shM_{(X,D),\bar x}\ra \NN^r$ given by recording the vanishing order of a section along $D_1,...,D_r$. This map factors through $\overline\shM_{\bar x}$. We claim that in fact $\overline\shM_{\bar x}\ra \NN^r$ is injective. Indeed, given two functions $f_1,f_2$ with the same image, the quotient $f_1/f_2$ is invertible on a set whose complement has codimension two, so by the $S_2$ property extends to a section of $\shO^\times_{X,x}$ and this is trivial in $\overline\shM_{\bar x}$.
The group $G$ of Cartier divisors with support in $D_1\cup...\cup D_r$ is a subgroup of $\ZZ^r$ and hence finitely generated and $G\cap \NN^r$ coincides with the image of $\overline\shM_{\bar x}$ in $\NN^r$, so $\overline\shM_{\bar x}$ is integral, saturated and finitely generated. 

We now address the stratification of the chart $V$. 
This will be a refinement of the stratification by the locally closed sets 
$$\shS:=\left\{\left.\left(\bigcap_{i\in I}D_i\right)\setminus \left(\bigcup_{j\in \{1,...,r\}\setminus I}D_j\right)\right| I\subset\{1,...,r\} \right\},$$
indeed let $\eta$ be the generic point of some irreducible component $E$ of a stratum $S_I\in\shS$ for some $I\subset\{1,...,r\}$.
By what we just said before, $\overline\shM|_E$ is a subsheaf of the constant sheaf $\NN^{|I|}$.
The stalk $\overline\shM_{\bar\eta}$ is finitely generated, let $g_1,...,g_n\in \shM_{\bar\eta}$ descend to a set of generators of $\overline\shM_{\bar\eta}$. 
Let $V_i\ra V$ be the open set of definition of $g_i$, so we have an isomorphism
$$\pi:\Gamma(V_1\cap...\cap V_n\cap E,\overline\shM)\ra \overline\shM_{\bar\eta}.$$
Let $\bar y\in V_1\cap...\cap V_n\cap E$ be a point, we want to show that also $\rho:\Gamma(V_1\cap...\cap V_n\cap E,\overline\shM)\ra \overline\shM_{\bar y}$ is an isomorphism. 
We know $\rho$ is injective because $\pi$ factors through $\rho$. This implies surjectivity as well because a section at $x$ with certain vanishing orders along the $D_{i}$ has that same vanishing behaviour at $\bar\eta$.
Hence $\overline\shM$ is constant on $V_1\cap...\cap V_n\cap E$. We obtain a stratification into sets like this by induction on dimension (also applying the above to generic points of components of the locus that wasn't covered in a prior step).
\end{proof}

\begin{remark} 
Note that divisorial log structures are not necessarily coherent in the sense of admitting charts (an assumption often made in the literature). For instance for $X=\Spec \ZZ[x,y,z,w]/(xy-zw)$ and $D=V(z)$, the sheaf $\overline\shM_{(X,D)}$ has rank one at the origin (generated by $z$) but rank two along $x=y=0$ away from the origin, so $\shM_{(X,D)}$ cannot have a chart at the origin, cf. \cite[Ex. 1.11]{logmirror}.
The log structure given here is still \emph{relatively coherent}, \cite[Def.~3.6\,(1)]{rounding}. Theorem~\ref{main-log-result} below covers the yet more general log structures introduced above, e.g. when the normal local model is non-toric.
\end{remark}

\begin{definition}
Let $X$ be normal with a divisorial log structure $\overline\shM_{(X,D)}$, let $\rho_1,...,\rho_s\in \Gamma(X,\overline\shM_{(X,D)})$ be global sections (possibly $s=0$).
Given a geometric point $x\in X$, by the previous Lemma, $\overline\shM_{(X,D),x}$ is a toric monoid that embeds in the monoid of vanishing orders $\NN^r$ where $r$ depends on $x$. 
Let $g^x_1,...,g^x_t$ denote the minimal generators of $\overline\shM_{(X,D),x}$. 
Each of $g^x_1,...,g^x_t,\rho_1,...,\rho_s$ gives rise to an element of $\NN^r$ and we define $m_x\in \NN$ to be the maximum of the components of all these elements.
We then define for any subset $X_0\subset X$,
$$m(X_0,\rho_1,...,\rho_s):=\sup\{m_x|x\in X_0\}.$$
\end{definition}

\noindent Let $f:X\ra Y$ be a finitely presented flat morphism of either
\begin{enumerate}
\item normal complex analytic spaces or 
\item normal algebraic spaces defined over a field or excellent discrete valuation ring with $Y$ locally Noetherian.
\end{enumerate}
In case (1) we additionally assume $f$ to be proper.
Let $D\subset Y$ be a non-empty reduced effective Cartier divisor and $E\subset X$ a possibly empty reduced effective Cartier divisor that is flat over $Y$.
We define log structures $\shM_X:=\shM_{(X,f^{-1}(D)\cup E)}$ and $\shM_Y:=\shM_{(Y,D)}$ on $X$ and $Y$ respectively.
Note that $f$ is compatible with these log structures, i.e. we have a commutative diagram
$$
\vcenter{ \xymatrix{ 
\shO_X & \shM_X \ar[l]\\
f^{-1}\shO_Y\ar[u] & f^{-1}\shM_Y. \ar[u]\ar[l]\\
} }
$$
Let $f^\dagger:(X,\shM_X)\ra (Y,\shM_Y)$ denote the map of log spaces given by the diagram.
Let $0\in D$ be a point, $\rho_1,...,\rho_s\in \overline \shM_{Y,0}$ be the minimal generators and $X_0=f^{-1}(0)$. 
\begin{lemma}
$m(X_0,\rho_1,...,\rho_s)<\infty$
\end{lemma}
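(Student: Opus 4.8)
The plan is to combine the quasi-compactness of the fibre $X_0$ with the constructibility of $\overline{\shM}_X$ established in Lemma~\ref{constructible}. First I would record that $X_0=f^{-1}(0)$ is quasi-compact: in the algebraic case it is of finite type over the residue field $\kappa(0)$ since $f$ is finitely presented, and in the analytic case it is compact because $f$ is assumed proper. Next I would note that the integer $m_x$ is intrinsic to the geometric point $x$: it depends only on the toric monoid $\overline{\shM}_{X,\bar x}$, on its realisation as a submonoid of the vanishing-order monoid $\NN^r$ (with $r$ the number of branches through $x$), and on the images of the fixed germs $\rho_1,\dots,\rho_s\in\overline{\shM}_{Y,0}$ under the stalk map $\overline{\shM}_{Y,0}\to\overline{\shM}_{X,\bar x}$ induced by $f^\dagger$ (here $f(x)=0$, and over $X_0$ the sheaf $f^{-1}\overline{\shM}_Y$ is the constant sheaf $\overline{\shM}_{Y,0}$, so each $\rho_j$ does give a well-defined element of $\overline{\shM}_{X,\bar x}$). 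Being intrinsic, $m_x$ may be computed on any \'etale chart, so we are free to pass to the charts furnished by Lemma~\ref{constructible}.

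I would then cover $X_0$ by finitely many of the \'etale neighbourhoods $V_\alpha$ of Lemma~\ref{constructible}, using quasi-compactness; on each $V_\alpha$ the sheaf $\overline{\shM}_X$ is constant along the finitely many locally closed strata of a stratification refining the intersection stratification $\{S_I\}$ by the branches $D_1,\dots,D_r$ of the divisor $f^{-1}(D)\cup E$. The crucial point is that $m_x$ is constant on each such stratum $W$. Indeed, $W$ lies inside a single $S_I$, so the number of branches through each of its points equals $|I|$, and the vanishing-order embedding $\overline{\shM}_{X,\bar x}\hookrightarrow\NN^{|I|}$ is, by the very construction in Lemma~\ref{constructible}, given by the orders along the fixed branches $D_i$ with $i\in I$; since $\overline{\shM}_X$ is constant along $W$, this embedding, and hence the images of the minimal generators $g_1^x,\dots,g_t^x$, is independent of $x\in W$. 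The same applies to the image in $\NN^{|I|}$ of each fixed germ $\rho_j$. Therefore the maximum of all components occurring, namely $m_x$, is constant on $W$.

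Since there are finitely many charts $V_\alpha$ and finitely many strata in each, the function $x\mapsto m_x$ takes only finitely many values on $X_0$, whence $m(X_0,\rho_1,\dots,\rho_s)=\sup_{x\in X_0}m_x<\infty$. The step I expect to require the most care is verifying that $m_x$ is genuinely locally constant rather than merely the abstract monoid $\overline{\shM}_{X,\bar x}$: one must confirm that the vanishing-order realisation $\overline{\shM}_{X,\bar x}\hookrightarrow\NN^r$ is constant along a stratum, which is precisely what the refinement over the $S_I$ in Lemma~\ref{constructible} guarantees, and that the pulled-back germs $\rho_j$ behave as locally constant sections of $\overline{\shM}_X$ near $X_0$. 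Once this bookkeeping is settled, the finiteness of the supremum is immediate from quasi-compactness.
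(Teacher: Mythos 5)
Your argument is correct and follows essentially the same route as the paper's (very terse) proof: quasi-compactness of $X_0$ plus the constructibility of $\overline\shM_X$ from Lemma~\ref{constructible} yield finitely many strata, on each of which the stalk together with its vanishing-order realisation --- and hence $m_x$ --- is constant. You simply spell out the details the paper leaves implicit.
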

\begin{proof} 
By Lemma~\ref{constructible}, $\overline \shM_{X}$ is constructible and, by the assumptions on $f$, $X_0$ permits a finite cover so there are only finitely many stalks of $\overline\shM_{X}$ to consider.
\end{proof}

We denote by $f_0^\dagger:X_0^\dagger \ra 0^\dagger$ the base change of $f^\dagger$ to $0$ as a log morphism. We now recall what this means.
The underlying morphism is just the fibre $f_0:X_0\ra 0$. Consider the commutative diagram
$$
\vcenter{ \xymatrix{ 
\shO_{X_0} & \shM_X|_{X_0} \ar_\alpha[l]\\
f_0^{-1}\shO_{0}\ar[u] & f^{-1}\shM_Y|_0 \ar[u]\ar[l]\\
} }
$$
which constitutes a map of pre-log-spaces.
We transition to the associated log spaces by taking for $\shM_{X_0}$ the pushout $\shM_X|_{X_0}\oplus_{\alpha^{-1}(\shO^\times_{X_0})}\shO^\times_{X_0}$ and similarly for $\shM_0$.
It holds $\overline\shM_X|_{X_0}= \overline\shM_{X_0}$, so this is again constructible.

For $y\in Y$, let $X_y:=f^{-1}(y)$ denote the fibre.
Assume that $E$ is a Cartier divisor and that either
\begin{enumerate}
\item[(A1)] $E_y=X_y\cap E$ and $X_y$ are locally rigid for every point $y\in Y\setminus D$, i.e. $\shT^1_{X_y/y}=0$ 
\item[(A2)] assume that $\shT^1_{X/Y}$ is supported in $f^{-1}(D)\cup E$.
\end{enumerate}
Let $t$ be a local equation for $D$ at $0$.

In case (A1), by the finiteness assumptions and Lemma~\ref{rigid-nearby-fibres}, there exists $N>0$ such that $t^N\shT^1_{X/Y}=0$ holds in a neighbourhood of $X_0$. 
Similarly in case (A2) there exists $N,M>0$, so that locally along $X_0$, for $w$ a local equation of $E$, $t^Nw^M\shT^1_{X/Y}=0$.
By further increasing $N$ if needed, we may assume that $2N\ge m(X_0,\rho_1,...,\rho_2)$ and that $\ker_{t^N}(\shT^2_{X/Y})$ is stable in the sense of \eqref{increasing-sequence-to-define-stable}. 
\begin{theorem} 
\label{main-log-result}
Let $f':X'\ra Y$ be another flat proper map of complex analytic spaces (respectively finitely presented morphism of algebraic spaces) and in case
\begin{enumerate}
\item[(A1)] assume for $k\ge 4N+1$ to have an isomorphism $\varphi: X'_k\cup E'\ra X_k\cup E$ over $D$ where $X_k,X'_k$ denote the base change of $f,f'$ to $(D,\shO_Y/(t^k))$ respectively; and in case
\item[(A2)] assume for $k\ge 4N+1$ and $r\ge 4M+1$ to have an isomorphism $\varphi: X'_k\cup E'_r\ra X_k\cup E_r$ over $D$ with $E_r,E'_r$ the $r$th order neighbourhood of $E,E'$ respectively, so that, locally along $E'$, $\varphi^*(w)=w'$ for $w,w'$ a restriction to domain and target of $\varphi$ of a local equation of $E,E'$ respectively.
\end{enumerate}
The following hold true.
\begin{enumerate}
\item The maps of pairs $(X,f^{-1}(D)\cup E)\ra (Y,0)$ and $(X',(f')^{-1}(D)\cup E')\ra (Y,0)$ are locally along $X_0$ and $X'_0$ isomorphic, so in particular 
\item $X'$ is also normal in a neighbourhood of $X'_0:=(f')^{-1}(0)$ and
\item for $X^\dagger_0$, ${X'_0}^\dagger$ and $0^\dagger$ the log spaces obtained from restricting the divisorial log structures of the pairs $(X,f^{-1}(D)\cup E), (X',(f')^{-1}(D)\cup E'), (Y,D)$ to $X_0,X_0',0$ respectively, there is an isomorphism 
$\varphi^\dagger_0: {X'_0}^\dagger\ra X^\dagger_0$ over $0^\dagger$ whose underlying morphism $\varphi_0$ is the reduction of $\varphi$ modulo $t$.
\end{enumerate}
\end{theorem}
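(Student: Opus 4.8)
The plan is to read statements (1) and (2) directly off the local uniqueness results of the previous two sections, and to reserve the real work for the log statement (3).

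For (1) I fix a point $x\in X_0$ and apply, in case (A1), Theorem~\ref{etale-result} (in the analytic setting Theorem~\ref{analytic-result}) with $t$ a local equation of $D$ at $0$, $Z=E$ and $Z'=E'$; in case (A2) I instead invoke Theorem~\ref{etale-result-divisor} (resp.\ Theorem~\ref{analytic-result-divisor}) with $w$ a local equation of $E$ and the thickenings $Z_r=V(w^r)$, $Z'_r=V((w')^r)$. The hypotheses hold by the standing assumptions: $N$ and $M$ were fixed before the theorem so that $t^N\shT^1_{X/Y}=0$ (resp.\ $t^Nw^M\shT^1_{X/Y}=0$) near $X_0$ (using Lemma~\ref{rigid-nearby-fibres} in case (A1)), and $t$ is a non-zerodivisor on the flat thickenings of $E$. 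Each application produces \'etale (resp.\ analytic) neighbourhoods $U',U$ and an isomorphism $\varphi_x:U'\to U$ over $Y$ that agrees with $\varphi$ modulo $t^{k-2N}$ (and, in case (A2), on the $(r-2M)$-th neighbourhood of $E$). Since it is over $Y$ and agrees with $\varphi$ on $E'$, it is an isomorphism of the pairs, giving (1); and since normality is \'etale-local (resp.\ local analytic) and $U\subset X$ is normal, $U'$ and hence $X'$ is normal along $X'_0$, giving (2). Finally $k-2N\ge 1$ forces the reduction of $\varphi_x$ modulo $t$ to be exactly $\varphi_0$.

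For (3), recall that $\overline\shM_{X_0}=\overline\shM_X|_{X_0}$ is constructible with toric stalks by Lemma~\ref{constructible}. Each $\varphi_x$ identifies the divisorial log structures $\shM_X|_U\cong\shM_{X'}|_{U'}$, and reducing modulo $t$ yields a local log isomorphism over $0^\dagger$ with underlying map $\varphi_0$. To glue these I compare, over an overlap, the induced maps on the ghost sheaf $\overline\shM_{X_0}$. Near $y\in X_0$ this map is recorded by the vanishing orders of the finitely many minimal generators of $\overline\shM_{X_0,y}$ along the components of $f^{-1}(D)\cup E$ passing through $\varphi_0(y)$. The orders along the divisor $E$ are matched automatically, since each $\varphi_x$ is an honest isomorphism of pairs and hence preserves $E$ together with the order of vanishing along it; the orders along the $f^{-1}(D)$-directions are bounded by $m(X_0,\rho_1,\ldots,\rho_s)\le 2N$, while any two of the local isomorphisms agree with $\varphi$ modulo $t^{k-2N}$ with $k-2N>2N$, so they induce the same map there. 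Thus the local ghost maps agree on overlaps and glue to a global isomorphism $\varphi_0^{-1}\overline\shM_{X_0}\cong\overline\shM_{X'_0}$, compatible over $0^\dagger$ because the $\varphi_x$ are over $Y$ and the bound $2N\ge m$ also captures the pulled-back generators $\rho_i$.

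It then remains to promote this ghost-sheaf isomorphism to one of the log structures themselves. Using the ring isomorphism $\varphi_0^\sharp:\shO_{X_0}\cong\varphi_{0*}\shO_{X'_0}$ together with the defining extensions $1\to\shO^\times\to\shM\to\overline\shM\to 0$, the map on units is forced and each generator is lifted up to the unit ambiguity that has already been matched, producing the log isomorphism $\varphi_0^\dagger:{X'_0}^\dagger\to X_0^\dagger$ over $0^\dagger$ with underlying morphism $\varphi_0$. I expect the crux to be the gluing in step (3): the log data along the $f^{-1}(D)$-directions is not seen by $X_0$ and $\varphi_0$ alone but only through the finite-order isomorphism $\varphi$, so the whole argument rests on the bound $2N\ge m(X_0,\rho_1,\ldots,\rho_s)$, which guarantees that $\varphi$ already pins down the vanishing orders of all relevant generators and thereby makes the local log isomorphisms, and their gluing, independent of the auxiliary choices of $\varphi_x$.
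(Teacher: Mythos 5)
Your treatment of (1), (2) and of the ghost sheaves in (3) follows the paper's route: local uniqueness theorems for (1)--(2), then a finite cover by local isomorphisms $\varphi_\alpha$ agreeing with $\varphi$ modulo $t^{k-2N}$, and the bound $2N\ge m(X_0,\rho_1,\ldots,\rho_s)$ to see that the induced maps on $\overline\shM$ agree on overlaps because the perturbation $t^{2N+1}g$ cannot alter the vanishing orders of the (order $\le 2N$) generators. Up to that point the proposal matches the paper.

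The gap is in your final step, where you ``promote'' the ghost-sheaf isomorphism to an isomorphism of the log structures using $\varphi_0^\sharp$ and the extension $1\to\shO^\times\to\shM_{X_0}\to\overline\shM_{X_0}\to 1$. This does not work as stated: the structure map $\shM_{X_0}\to\shO_{X_0}$ of the restricted log structure is far from injective (every section vanishing on $f^{-1}(D)\supseteq X_0$ maps to $0$), so the log isomorphism is \emph{not} determined by the ring isomorphism $\varphi_0^\sharp$ together with the ghost isomorphism; the residual ambiguity is exactly an element of $\shHom(\overline\shM_{X_0},\shO^\times_{X_0})$, which is in general nonzero, and ``lifting each generator up to the unit ambiguity that has already been matched'' is precisely the assertion to be proved, not an argument. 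The paper closes this by gluing the honest local isomorphisms of $\shM$ induced by the $\varphi_\alpha$ (not by lifting the ghost map): on an overlap the discrepancy $\varphi_\beta^{-1}\circ\varphi_\alpha$ restricted to $X_0$ is an automorphism of $\shM_{U_\alpha\cap U_\beta}|_{X_0}$ over the identity, which by \cite[Prop.~2.2]{ScSi06} is of the form $m\mapsto m\cdot h(\overline m)$ for a unique $h\in\shHom(\overline\shM,\shO^\times)$; since the underlying ring automorphism is the identity modulo $t^{2N+1}$ while the generators vanish to order at most $2N$ along $\{t=0\}$, the units $h(\overline m)$ are $\equiv 1$ modulo $t$, so $h|_{X_0}$ is trivial and the local log isomorphisms glue. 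You need this (or an equivalent) argument; without it part (3) is not established.
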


\begin{proof} 
Item (1) directly follows from previous results based on our choice of $N$:
In situation (A1), we conclude (1) from Theorem~\ref{etale-result} in the algebraic space case and from Theorem~\ref{analytic-result} in the analytic case.
In situation (A2), we conclude (1) from Theorem~\ref{etale-result-divisor} in the the algebraic space case and from Theorem~\ref{analytic-result-divisor} in the analytic case.
Part (2) follows from (1). For (3), we first show that $\varphi$ induces a commutative diagram
\[
\xymatrix{
\overline \shM_X|_{X_0} \ar[r] & \varphi_0^{-1}\overline \shM_{X'}|_{X'_0} \\
f^{-1}\overline \shM_Y|_{X_0}\ar[u] \ar[r] & \varphi_0^{-1}{f'}^{-1}\overline \shM_Y|_{X'_0}\ar[u].
}
\]
with horizontal maps being isomorphisms.
By (2) and Lemma~\ref{constructible}, $\overline \shM_X$ and $\overline \shM_Y$ are constructible sheaves. 
By quasi-compactness, we find a finite cover $\{U_\alpha\}_\alpha$ of $X_0$ by (\'etale) open sets of $X$ and similarly $\{U'_\alpha\}_\alpha$ (\'etale) open sets of $X'$ such that $\varphi_\alpha:U'_\alpha\ra U_\alpha$ is an isomorphism over $Y$ restricting to the identity modulo $t^{2N+1}$. Furthermore, we may assume that $\overline \shM_X|_{U_\alpha}$ is generated by global sections and similarly also $\overline \shM_{X'}|_{U'_\alpha}$.

This readily implies that the restrictions to $X_0$ of $\overline \shM_X$ and $\overline \shM_{X'}$ are isomorphic by means of $\varphi$ (compatibly with the map from $f_0^{-1}\overline \shM_{Y,0}$).
Indeed $\varphi_\alpha$ provide isomorphisms locally and to check that these globalize, note that $\varphi_\alpha$ and $\varphi_{\beta}$ differ (additively) by terms of the form $t^{2N+1}g$ for $g$ a holomorphic (respectively regular) function, so by our choice of $N$ in particular the vanishing orders of generators of stalks are preserved but then stalks entirely are preserved since stalks embed in the vanishing order monoids by the proof of Lemma~\ref{constructible}.

In order to produce an isomorphism of the restrictions of $\shM_X$ and $\shM_{X'}$ to $X_0$ compatibly with the maps to $\shO_{X_0}$ and the embeddings of $f^{-1}\shM_Y$ and ${f'}^{-1}\shM_Y$ respectively, we show that the restrictions of the local isomorphisms $\varphi_\alpha$ glue. 
Consider open sets $U_\alpha,U_\beta$ that meet each other, so we have two morphisms
\[
\xymatrix{
\shM_{U_\alpha\cap U_\beta} \ar^{{\varphi_\alpha}\atop{\ }}@<-.5ex>[r] \ar@<.5ex>_{{\ }\atop{\varphi_\beta}}[r] &  \shM_{U'_\alpha\cap U'_\beta}
}
\]
and, because $\shM_{U_\alpha\cap U_\beta}\subset \shO_{U_\alpha\cap U_\beta}$ and $\shM_{U'_\alpha\cap U'_\beta}\subset \shO_{U'_\alpha\cap U'_\beta}$,
compatibly with the structure sheaves, $\varphi_\alpha$ and $\varphi_\beta$ are identical modulo $t^{2N}$ and induce the identity on $\overline\shM_{X_0}$.
Composing yields an automorphism of the source $(\varphi_\beta^{-1}\circ\varphi_\alpha)\in \Aut(\shM_{U_\alpha\cap U_\beta})$ and if we show its restriction to $X_0$ yields the identity then we are done.
Note that both $\varphi_\alpha$ and $\varphi_\beta$ fix $\shO^\times_{X_0}$ pointwise.
Using constructibleness, by \cite[Proposition 2.2]{ScSi06}, the natural inclusion $$\shH om(\overline\shM_{X},\shO^\times_{X})\ra \Aut(\shM_{X})$$ 
that sends $h$ to $(m\mapsto m\cdot h(\overline m))$ is an isomorphism. 
The restriction of an automorphism to $X_0$ is just the image in $\shH om(\overline\shM_{X}|_{X_0},\shO^\times_{X_0})$. 
We see now that $(\varphi_\beta^{-1}\circ\varphi_\alpha)|_{X_0}$ restricts to the identity since the images of the generators in $\shO^\times_{U_\alpha\cap U_\beta}$ are $1$ modulo $t^{2N}$, so in particular $1$ when restricting to $U_\alpha\cap U_\beta\cap X_0$.
\end{proof}

We end the article with an improvement on the uniqueness properties of $\varphi^\dagger_0$ in Theorem~\ref{main-log-result} in case $f$ is relatively log smooth:
recall the notion of a relatively log smooth morphism from \cite[Def.~3.6\,(2)]{rounding}.
\begin{proposition} 
If $f:X^\dagger\ra \Spec(\NN\ra \kk)$ is relatively log smooth for $\kk$ a separably closed field and $\sigma:X\ra X$ an isomorphism of the underlying scheme, then there exists at most one upgrade of $\sigma$ to a log morphism so that $f\circ\sigma=f$ holds as an identity of log morphisms.
\end{proposition}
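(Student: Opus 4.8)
The plan is to show that the set of log upgrades of $\sigma$ satisfying $f\circ\sigma=f$ is either empty or a singleton, by realizing it (when non-empty) as a torsor under a group that relative log smoothness forces to be trivial. Since $0^\dagger=\Spec(\NN\ra\kk)$ has a single underlying point, the identity $f\circ\sigma=f$ already holds on underlying schemes, so all the content sits at the level of log structures. Any log upgrade of $\sigma$ must preserve the divisor defining $\shM_X$ and hence induces the \emph{same} canonical map $\overline\sigma^\flat$ on ghost sheaves, because vanishing orders along the divisor are intrinsic to $\sigma$ (in particular, if $\sigma$ does not preserve the divisor, there is no upgrade and the statement is vacuous). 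Thus two upgrades $\sigma^\dagger_1,\sigma^\dagger_2$ differ by $\gamma:=\sigma^\dagger_2\circ(\sigma^\dagger_1)^{-1}$, a log automorphism of $X^\dagger$ over $0^\dagger$ inducing the identity both on the underlying scheme and on $\overline\shM_X$. It therefore suffices to prove that the group $G$ of all such automorphisms is trivial.

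First I would describe $G$ explicitly. As $f$ is relatively log smooth, $X$ is normal, so by Lemma~\ref{constructible} the sheaf $\overline\shM_X$ is constructible with toric stalks, and \cite[Proposition 2.2]{ScSi06} applies exactly as in the proof of Theorem~\ref{main-log-result}: taking global sections of the isomorphism $\shHom(\overline\shM_X,\shO^\times_X)\cong\Aut(\shM_X)$, $h\mapsto(m\mapsto m\cdot h(\overline m))$, identifies automorphisms of $\shM_X$ over $\shO_X$ inducing the identity on the scheme with sheaf homomorphisms $h\in\Hom(\overline\shM_X,\shO^\times_X)$. Compatibility of $\gamma$ with $f$ over $0^\dagger$ fixes the image $\rho:=f^\flat(1)$ of the generator of $\overline\shM_0=\NN$, which is the single global constraint $h(\overline\rho)=1$ for $\overline\rho\in\Gamma(X,\overline\shM_X)$. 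Hence $G\cong\{h\in\Hom(\overline\shM_X,\shO^\times_X):h(\overline\rho)=1\}$, and the goal becomes to show that any such $h$ equals the constant homomorphism $1$.

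The heart of the argument is a codimension-one reduction enabled by relative log smoothness. The key local input, to be extracted from \cite[Def.~3.6\,(2)]{rounding}, is that the central fibre $X_0=f^{-1}(0)$ is reduced; equivalently, at the generic point $\eta_i$ of each irreducible component $D_i$ of $X_0$ the stalk $\overline\shM_{X,\eta_i}$ is $\NN$ and $\overline\rho$ maps to its generator. (Separable closedness of $\kk$ is used here to guarantee that the $D_i$ are geometrically irreducible and that the stalks are the expected split toric monoids, so these generic points and multiplicities behave well.) Reading $h(\overline\rho)=1$ near $\eta_i$ then gives $h(\mathrm{gen}_i)=1$ as a section of $\shO^\times_X$ on an open neighbourhood of $\eta_i$. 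Now let $x\in X_0$ be arbitrary with minimal generators $g^x_1,\dots,g^x_t$ of the toric monoid $\overline\shM_{X,x}$, each realised as a section of $\overline\shM_X$ near $x$ with prescribed vanishing orders along the components $D_i$ through $x$. Since opens are stable under generization and $\eta_i$ generizes $x$, every neighbourhood of $x$ contains $\eta_i$; thus each $h(g^x_j)$ is a unit on a connected, hence (by normality) integral, neighbourhood of $x$ which equals $1$ near $\eta_i$. By the identity principle — equality of units on a nonempty open of an integral scheme, respectively analytic continuation on a connected complex space — we conclude $h(g^x_j)=1$. Therefore $h$ is trivial on every stalk, i.e.\ $h=1$, so $G$ is trivial and $\sigma^\dagger_1=\sigma^\dagger_2$.

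I expect the main obstacle to be pinning down precisely this local input from relative log smoothness, namely the reducedness of the central fibre (equivalently, that $\overline\rho$ generates $\overline\shM_{X,\eta_i}$ rather than being a proper multiple of the generator). Without multiplicity one, the relation $h(\mathrm{gen}_i)^{e_i}=1$ only confines $h(\mathrm{gen}_i)$ to an $e_i$-th root of unity — a genuinely non-trivial possibility over a field containing such roots, as the non-log-smooth family $x^2y=t$ already shows — and the entire rigidity collapses. Once this codimension-one normalisation is secured, the remaining steps (the description of $G$ via \cite[Proposition 2.2]{ScSi06} and the generization/identity-principle propagation) are routine, with separable closedness entering only to make the component and stalk geometry irreducible and split.
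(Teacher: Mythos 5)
Your strategy---reduce to a log automorphism $\gamma$ of $X^\dagger$ over $0^\dagger$ with identity underlying map, describe it as an $h\in\Hom(\overline\shM_X,\shO_X^\times)$ with $h(\overline\rho)=1$, and kill $h$ by propagating from generic points of components---is a reasonable reconstruction of what Kisin actually does; the paper's own proof is a one-liner citing \cite[Lemmata (2.1),(2.2)]{kisin} and observing that relative log smoothness locally embeds into a log smooth structure. But your execution has two genuine gaps. First, in this proposition $X$ is the fibre over the standard log point, i.e.\ it plays the role of the \emph{central fibre}, not of the normal total space to which Lemma~\ref{constructible} applies: a relatively log smooth scheme over $\Spec(\NN\ra\kk)$ is typically non-normal and reducible (normal crossings), and can even be non-reduced. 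So ``$X$ is normal'', the direct appeal to Lemma~\ref{constructible}, and above all the step ``a connected, hence (by normality) integral, neighbourhood of $x$'' are false as stated; a unit on $\Spec\kk[x,y]/(xy)$ that equals $1$ along one branch need not equal $1$ on the other (e.g.\ $1+x$). The propagation must be run separately on each irreducible component through $x$, using that at the generic point of \emph{every} such component $g^x_j$ maps to a multiple of $\overline\rho$, and then glued using reducedness.

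Second, and more seriously, your ``key local input''---that $\overline\rho$ generates the stalk at each generic point, i.e.\ that $X$ is reduced---is not a consequence of relative log smoothness, and you concede that without it ``the entire rigidity collapses.'' It does not: $\Spec\kk[x,y]/(x^2y)\ra 0^\dagger$ with chart $\NN\ra\NN^2$, $1\mapsto(2,1)$, is log smooth in characteristic $\neq 2$ and non-reduced, yet a direct computation shows the proposition still holds for it. Your argument misses this because, in identifying $G$ with $\{h: h(\overline\rho)=1\}$, you discard the other constraint imposed by compatibility with $\alpha$, namely $(h(\overline m)-1)\cdot\alpha(m)=0$ for every $m$, which confines $h(\overline m)$ to $1$ modulo the annihilator of $\alpha(m)$; it is this constraint, combined with $h(\overline\rho)=1$ and invertibility of the relevant multiplicities, that forces $h=1$ in the non-reduced case. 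Relatedly, your opening claim that any two upgrades of $\sigma$ induce the same map on ghost sheaves is justified by ``vanishing orders are intrinsic to $\sigma$'', which presupposes an ambient divisorial picture the abstract statement does not supply; this is precisely the content of Kisin's Lemma (2.1) and needs its own argument. As written, the proposal establishes the statement only under the additional hypothesis that $X$ is reduced, and only after repairing the component-by-component propagation.
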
 
\begin{proof}  
We note that \cite[Lemmata (2.1),(2.2)]{kisin} generalize from log smoothness to relative smoothness. 
Indeed, by the definition of relative smoothness, locally there exists a log smooth structure into which the given one embeds. One carefully traces through the arguments in loc.cit. and finds them to work.
\end{proof}


\end{document}